\DeclareMathOperator\GL{GL}
\DeclareMathOperator\diag{diag}
\title[Elementary divisor rings with Dubrovin-Komarnytsky property]{Elementary divisor rings with \\ Dubrovin-Komarnytsky property\footnote{The current  article finishes the joint collaboration that had started prior to Professor Bohdan  Zabavsky's demise in August 2020.  
The research  was supported by the UAEU  UPAR  grants  G00002160 and G00003658.}}
\author{Victor Bovdi and Bohdan Zabavsky}
\abstract{%
    Elementary divisor rings were first introduced by Kaplansky in his seminal work. The purpose of this research is to extend Kaplansky’s study of commutative elementary divisor rings to certain classes of associative rings under weaker conditions than commutativity. We introduce two new classes of non-commutative rings: those with the $DK$-property (Dubrovin–Komarnytsky property) and those with the $D$-property (Dubrovin property), and investigate the structure of elementary divisor rings within these settings. Our main focus is on non-commutative rings of stable range $1$. For such rings, we develop a theory of reduction matrices, which allows us to construct and analyze new families of non-commutative elementary divisor rings.  In addition, we introduce the concept of an elementary element in a non-commutative ring. We prove that for Bézout domains of stable range $1$ with the $DK$-property, a ring $R$ is an elementary divisor ring if and only if every nonzero element of $R$ is elementary.
    }
\keywords{B\'ezout ring, Hermite ring, Stable range, Diagonal reduction.}
\begin{document}

\vspace*{-1ex}
\begin{flushright}
    \textit{Dedicated to Professor M.~A.~Salim on the occasion of his 70th birthday.}
\end{flushright}

\tableofcontents

%\subsubsection*{Acknowledgements}
%The current  article finishes the joint collaboration that had started prior to %Professor Bohdan  Zabavsky's demise in August 2020.  
%The research  was supported by the UAEU  UPAR  grants  G00002160 and G00003658.

\section{Introduction}
Let $R$ be an associative (but not necessarily commutative) ring with $1\not=0$, let $U(R)$ be the group of units of $R$, and let $R^{n\times m}$ be the vector space  of ${n\times m}$ matrices over $R$ with $n,m\geq 1$. Let $\GL_n(R)$ be the group of units of the matrix ring $R^{n\times n}$.
The matrix $D:=\diag(d_{1},d_{2},\ldots,d_{s})\in R^{n\times m}$ means a (possibly rectangular) matrix  having $d_{1},\ldots, d_{s}$ (in which $s:=\min(n,m)$) on the main diagonal and zeros elsewhere.  By the main diagonal we mean the one beginning at the upper left corner. According to Kaplansky (see \cite[p.\,465]{7-kaplansky}),    a ring $R$ is called an {\it elementary divisor ring} if for any matrix $A$ over $R$ there exist invertible matrices $P$ and $Q$  of suitable sizes such that
\begin{equation}\label{FFF}
PAQ=D:=\diag(d_1,\ldots,d_k,0,\ldots,0),
\end{equation}
in which  $d_{i}$ is a total divisor of $d_{i+1}$, i.e., $Rd_{i+1}R\subseteq d_iR\cap Rd_i$ for each $i=1,\ldots, k-1$.

The class of elementary divisor rings is contained in the class of B\'ezout rings (for example, see \cite{7-kaplansky, Mon_Shchedryk, 1-zabavsky}), i.e.,  rings with nonzero unit in which every finitely generated one-sided ideal is a principal one-sided ideal. Note that  elementary divisor rings are Hermite rings. Hermite rings are    rings in which each  $1\times 2$ and $2\times 1$ matrix   has a  diagonal reduction, i.e., $(a,b)P=(c,0)$ and $Q(a, b)^T=(d,0)^T$, where    $a, b, c, d\in R$ and   $P, Q\in \GL_2(R)$  (see \cite{7-kaplansky, 1-zabavsky}). Elementary divisor rings have been studied by many authors (for example, see \cite{Ara_Goodearl_OMeara_Pardo, Bovdi_Zabavsky, Bovdi_Zabavsky_2, Bovdi_Shchedryk_2, Bovdi_Shchedryk, Chen_Abdolyousefi, Dopico_Noferini_Zaballa,  Dubrovin_82, 3-dubrovin, Gatalevich, Calugareanu_Pop_Vasiu, Menal_Moncasi, 2-zabkom}) and an overview of this topic  can be found in \cite{9-cohn, 7-kaplansky, Mon_Shchedryk, Shchedryk_2012, 1-zabavsky}.

It is well known \cite{Warfield} that any finitely presented module over a valuation ring is isomorphic to a direct sum of cyclic modules. Thus, a natural question arises: whether there are other classes of rings that satisfy the above property. In the articles of Kaplansky \cite {7-kaplansky} and Larsen, Lewis and Shores \cite{ Larsen_Lewis_Shores}, it was shown that the above problem and the problem of reducing an arbitrary matrix to a diagonal form \eqref{FFF} are equivalent over commutative rings. Thus, such a reduction   combines the theory of rings and the theory of modules.

In the article \cite{Menal_Moncasi}, the notion of stable range comes to the theory of diagonalizability of matrices from $K$-theory.  Using this notation,  one of the  classes of rings was characterized in \cite{Menal_Moncasi}, namely the class of regular rings, which is important for solving the problem of reducing an arbitrary matrix to a diagonal form \eqref{FFF}. During the last decade,  algebraic $K$-theory has been actively used for the  study  of elementary divisor rings. The use of  invariants as the stable range of rings is of  particular importance (for some examples, see \cite{Anderson_Juett_2012, Bovdi_Shchedryk_2, Bovdi_Shchedryk,  Juett_Williams_2017, 4-mcGovern,  Menal_Moncasi, Mortini_Rupp, Mon_Shchedryk, 4-zabavsky, 1-zabavsky,   Zabavsky_Gatalevych_19}). The aim of our study is an attempt to extend the results obtained for commutative rings to the case of non-commutative rings. Even the case of a  $2\times 2$  matrix ring over non-commutative rings shows that there are significant difficulties in realizing this project.  We have succeeded in obtaining some results with natural restrictions on non-commutative rings. In the general case, as part of his study of elementary divisor rings and related classes of rings, Kaplansky proved that if $R$ is an elementary divisor ring, then every finitely presented $R$-module is a direct sum of cyclic modules. The results of this article may therefore be of interest for the study of finitely presented modules over certain classes of noncommutative rings.

\section{Main Results and relations between them}

The {\it coboundary}  of a one-sided ideal $I$ of a ring $R$ is a  two-sided ideal which is equal to the intersection of all two-sided ideals which contain $I$.     Note that this definition is  left-right symmetric.

A nonzero element $a\in R$ is called a {\it right (left) duo} if $aR$ ($Ra$) is a two-sided ideal. Moreover,  if  $aR=Ra$,   then the element $a$ is called {\it duo}.

A ring   $R$ has the {\it  $D$-property (Dubrovin's property)}  \cite[p.\,33]{1-zabavsky} if for each  $a\in R$ there exists  $a_*\in R$ such that $RaR=a_*R=Ra_*$ (in other words, the  coboundary of $R$ is a   principal ideal).

In the sequel, we  consider  only  rings $R$ with the $D$-property. Examples of such rings are simple rings \cite[\S 4.2]{1-zabavsky}, quasi-duo elementary divisor rings \cite[Theorem 1]{2-zabkom}, and semi-local semi-prime elementary divisor rings \cite[Theorem 1]{3-dubrovin}. Several examples of such rings are given in \cite{Bovdi_Zabavsky_2}.

Two matrices $A$ and $B$ over a ring $R$ are called {\it equivalent} over the ring $R$ if there exist invertible matrices $P$ and $Q$ over $R$ of  suitable sizes such  that $A=PBQ$ and it will be  denoted by $A\sim B$.

Let $A=(a_{ij})\in R^{m\times n}$.  The coboundary of the right ideal generated by all  elements of the  matrix $A=(a_{ij})$  is  denoted by $A_*$,
that is
%\[
$A_*=\sum\limits_{i=1}^m\sum\limits_{j=1}^n Ra_{ij}R$.
%\]

A ring $R$  has  {\it  stable range $1$}  if the property $aR+bR=R$ implies $(a+bt)R=R$ for some $t\in R$.  Semi-local rings and  unit-regular rings \cite[p.\,45]{1-zabavsky} are examples of  rings of stable range $1$. Each  commutative   B\'ezout ring of  stable range $1$ is an elementary divisor ring. It is known \cite[Theorem 2]{4-zabavsky} that each   non-commutative  right B\'ezout ring  of stable range $1$ is a right Hermitian ring.

Our first result is of a technical nature, but it is actively used in what follows.

\begin{theorem}\label{ThR:1}
If  $R$ is  a B\'ezout ring  of stable range $1$, then
\begin{itemize}
\item[(i)] for any  $A\in  R^{2\times 2}$ there exist $z,\gamma,d\in R$ such that $A \sim \left[\begin{array}{cc}
z & \gamma \\
d & 0
\end{array}
\right]
$ and    $RzR=A_*$;

\item[(ii)] for any  $A\in R^{2\times 2}$ there exist $a,b,c\in R$ such that $A\sim
\left[\begin{array}{cc}
          a & 0 \\
          b & c
        \end{array}\right]$ and $RaR=A_*$.
\end{itemize}
\end{theorem}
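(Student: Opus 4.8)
The plan is to prove (i) in two stages: first use the Bézout and stable‑range‑$1$ hypotheses to diagonal‑reduce the matrix so that a $(2,2)$‑entry becomes zero, and then, in a second pass, move the two‑sided‑ideal generator $A_*$ into the $(1,1)$ position without disturbing the zero. For the first stage I would work with the second row $(a_{21},a_{22})$ of $A=(a_{ij})$. Since $R$ is a Bézout ring, the right ideal $a_{21}R+a_{22}R$ is principal, say equal to $dR$; since $R$ has stable range $1$ and any principal right ideal is generated by an element arising from a unimodular-type relation, one can arrange by an invertible column transformation $Q_1\in\GL_2(R)$ that $(a_{21},a_{22})Q_1=(d,0)$ — this is exactly the right Hermite property noted in the excerpt (non‑commutative right Bézout rings of stable range $1$ are right Hermitian, \cite[Theorem 2]{4-zabavsky}). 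Applying $Q_1$ on the right to $A$ gives $A\sim\left[\begin{smallmatrix} z' & \gamma' \\ d & 0\end{smallmatrix}\right]$; note that column operations do not change the coboundary $A_*$, since the new entries generate the same two‑sided ideal as the old ones (each new entry is an $R$‑combination of old columns and vice versa, and $A_*$ is a two‑sided ideal).

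The second stage is the heart of the argument: starting from $B=\left[\begin{smallmatrix} z' & \gamma' \\ d & 0\end{smallmatrix}\right]$ with $B_*=A_*$, I must produce an equivalent matrix $\left[\begin{smallmatrix} z & \gamma \\ d & 0\end{smallmatrix}\right]$ whose $(1,1)$‑entry $z$ satisfies $RzR=A_*$. Here the $D$‑property is essential: write $A_*=a_*R=Ra_*$ for the canonical generator. I would exploit that $a_*\in A_*$ can be written as a finite sum $\sum r_i z_i' s_i + \sum r_j' \gamma' s_j' + \sum r_k'' d s_k''$, and then design row and column operations of a special triangular form — adding multiples of one row (column) to the other — to collect all these pieces into the $(1,1)$ slot while keeping the $(2,1)$‑entry equal to $d$ (or an associate of $d$ generating the same ideals) and the $(2,2)$‑entry zero. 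The key technical point is that operations of the shape $\left[\begin{smallmatrix} 1 & u \\ 0 & 1\end{smallmatrix}\right]$ and $\left[\begin{smallmatrix} 1 & 0 \\ v & 1\end{smallmatrix}\right]$, used judiciously, change the $(1,1)$‑entry by elements of $A_*$ and preserve the ideal $A_*$; since after stage one $dR$ already lies inside the ideal generated by the first column, one has enough room to adjust $z'$ to an element $z$ with $RzR$ exactly $A_*$ rather than merely contained in it. I expect this "filling up the corner to the full coboundary" step to be the main obstacle, because one must simultaneously (a) reach all of $a_*R=Ra_*$ and not a smaller ideal, and (b) not destroy the zero in the $(2,2)$ position; the stable‑range‑$1$ condition is what lets one absorb the correction terms by a single unit adjustment rather than an unbounded process.

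Finally, part (ii) follows from (i) by a transpose/symmetry argument. Passing to the opposite ring $R^{\mathrm{op}}$ (or, equivalently, transposing and using that $R$ is left Bézout of stable range $1$ as well, stable range being left–right symmetric) converts a reduction of the form $\left[\begin{smallmatrix} z & \gamma \\ d & 0\end{smallmatrix}\right]$ into one of the form $\left[\begin{smallmatrix} a & 0 \\ b & c\end{smallmatrix}\right]$, and the coboundary $A_*$ is itself left–right symmetric by definition, so the condition $RaR=A_*$ is preserved under this dualization. I would spell out the dictionary $P A Q \leadsto Q^{T} A^{T} P^{T}$ and check that the target shape in (ii) is exactly the transpose of the target shape in (i), after which (ii) is immediate from (i).
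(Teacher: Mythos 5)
Your first stage (Hermite reduction of one row, plus the observation that equivalence preserves the coboundary, which is Lemma~\ref{LLL:1}) matches the paper, but the heart of part (i) --- producing a corner entry $z$ with $RzR$ \emph{equal} to $A_*$ --- is exactly where your argument stops being a proof. Your plan is to write a generator $a_*$ of $A_*$ as a finite two-sided sum $\sum r_iz's_i+\sum r_j'\gamma's_j'+\sum r_k''ds_k''$ and ``collect'' it into the $(1,1)$ slot by elementary operations; but an elementary row or column operation changes the corner entry only by one-sided combinations such as $ud$ or $\gamma v$, so there is no evident way to accumulate an arbitrary two-sided combination there, and you yourself flag this step as the unresolved obstacle. (You also lean on the $D$-property, which the paper's proof of this theorem does not use at all.) The paper avoids the problem by going in the opposite direction: it never tries to reach $a_*$. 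Starting from a lower-triangular form $\left[\begin{smallmatrix}\alpha&0\\ \beta&\gamma\end{smallmatrix}\right]$, it applies Lemma~\ref{LLL:2}(ii) on the \emph{left} to replace the first column so that the new $(1,1)$ entry $d$ satisfies $R\alpha+R\beta=Rd$ (hence the $(2,1)$ entry lies in $Rd$), and then on the \emph{right} so that the new corner entry $z$ satisfies $\gamma R+dR=zR$ (hence the remaining entries lie in $zR$). Every entry of the resulting matrix then lies in $RzR$, so $RzR$ contains the coboundary; since $z$ is itself an entry and the coboundary is an equivalence invariant, $RzR=A_*$. That containment trick is the idea your proposal is missing.

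Part (ii) contains a genuine error: it is not the transpose of part (i). The transpose of $\left[\begin{smallmatrix}z&\gamma\\ d&0\end{smallmatrix}\right]$ is $\left[\begin{smallmatrix}z&d\\ \gamma&0\end{smallmatrix}\right]$, which still has its zero in the $(2,2)$ position rather than the lower-triangular shape $\left[\begin{smallmatrix}a&0\\ b&c\end{smallmatrix}\right]$, and permuting rows or columns to repair the shape moves the distinguished entry out of the $(1,1)$ corner, destroying the conclusion $RaR=A_*$. The paper instead derives (ii) from (i) by one further application of stable range $1$: pick $t$ with $zR+\gamma R=(z+\gamma t)R=aR$, pass to $\left[\begin{smallmatrix}a&\gamma\\ \delta&0\end{smallmatrix}\right]$ by a column operation, write $\gamma=as$, and clear the $(1,2)$ entry with $\left[\begin{smallmatrix}1&-s\\ 0&1\end{smallmatrix}\right]$ to land on $\left[\begin{smallmatrix}a&0\\ \delta&-\delta s\end{smallmatrix}\right]$ with $RaR=RzR=A_*$. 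You need an argument of this kind; the opposite-ring symmetry alone does not deliver the stated shape.
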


A ring  $R$ has the {\it  $K$-property} (Komarnytsky's property)  (see \cite{Komarnitskii}) if  each  factor of a duo element is a duo  element.

Duo-rings and simple rings are rings with $K$-property. However,   a principal ideal domain $\mathbb{H}[x]$ over the classical quaternion divison ring $\mathbb{H}$ contains the  element $1+x^2$ which  is duo but not evert factor of $1+x^2=(1+ix)(1-ix)$ is  duo.

Of course, in any ring the central elements and invertible elements are always duo. More examples and relations between properties of such elements  can be found in \cite{Cheon_Kwak_Lee_Piao_Yun, 3-dubrovin, Gatalevich, Kim_Kwak_Lee, Kosan_Lee_Zhou, 2-zabkom, 1-zabavsky}.

We now introduce the following class of noncommutative rings. A ring is said to have the $DK$-property (Dubrovin–Komarnytsky property) if it satisfies both the $D$-property and the $K$-property.

\smallskip
Our second main result is the following.

\begin{theorem}\label{ThR:2} Let $R$ be an elementary divisor ring.  If $R$ has the $DK$-property, then  for each  matrix $A$ over $R$ there exists a matrix $C:=\diag(\varepsilon_1,\ldots, \varepsilon_k, 0,\ldots,0)$  such that $A\sim C$, $\varepsilon_1, \ldots, \varepsilon_{k-1}$ are duo elements  and \[ R\varepsilon_{i+1}R\subseteq R\varepsilon_i\cap \varepsilon_iR, \qquad  (i=1,\dots, k-1).
\]
\end{theorem}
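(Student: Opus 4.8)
The plan is to produce $C$ by simply \emph{keeping} the diagonal reduction that the elementary divisor hypothesis already provides, and then to observe that, under the $DK$-property, the diagonal entries of that reduction (except possibly the last one) are automatically duo. So first I would apply the elementary divisor property to $A$: there are invertible $P,Q$ with $PAQ=\diag(d_1,\dots,d_k,0,\dots,0)$ and $Rd_{i+1}R\subseteq d_iR\cap Rd_i$ for $i=1,\dots,k-1$. Since $d_i=0$ forces $Rd_{i+1}R\subseteq d_iR=0$ and hence $d_{i+1}=0$, the vanishing $d_i$ form a tail; absorbing them into the zero block (the case $A\sim 0$ being vacuous) I may assume that $d_1,\dots,d_k$ are all nonzero.

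Now set $C:=\diag(d_1,\dots,d_k,0,\dots,0)$ and $\varepsilon_i:=d_i$. Then $A\sim C$, and the containments $R\varepsilon_{i+1}R\subseteq R\varepsilon_i\cap\varepsilon_iR$ hold by construction, so everything reduces to showing that $\varepsilon_1,\dots,\varepsilon_{k-1}$ are duo. Fix $i\in\{1,\dots,k-1\}$. By the $D$-property choose $\delta\in R$ with $Rd_{i+1}R=\delta R=R\delta$; since $d_{i+1}\neq 0$ we get $\delta R=Rd_{i+1}R\neq 0$, so $\delta\neq 0$, and $\delta R=R\delta$ exhibits $\delta$ as a duo element. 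Moreover $\delta=1\cdot\delta\in R\delta=Rd_{i+1}R\subseteq d_iR$ by total divisibility, so $\delta=d_ir$ for some $r\in R$; thus $d_i$ is a factor of the duo element $\delta$, and $d_i\neq 0$ (again because $Rd_{i+1}R\subseteq d_iR$ with $d_{i+1}\neq 0$). By the $K$-property every factor of a duo element is duo, whence $\varepsilon_i=d_i$ is duo. Running $i$ over $1,\dots,k-1$ finishes the proof.

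I do not expect a genuine obstacle: the argument is short once one sees that the diagonal of the elementary divisor reduction can be used verbatim. The two points that need care are the normalization pushing the zero entries to the tail --- so that the $D$-property actually supplies \emph{nonzero} duo elements $\delta$ --- and the fact that $d_i$ enters as a \emph{one-sided} (left) factor of $\delta$, so that one uses the $K$-property in the form ``a left divisor of a duo element is duo'', which is what the stated $K$-property means when ``factor'' is read (as the quaternion example in the text makes clear) as a term in a product decomposition, whether on the left or the right.
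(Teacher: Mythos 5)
Your proof is correct and follows essentially the same route as the paper: take the elementary divisor reduction $\diag(d_1,\dots,d_k,0,\dots,0)$, use the $D$-property to produce a duo generator $\delta$ of $Rd_{i+1}R$ (the paper's $d_{i+1}^*$), note that the total divisibility condition makes $d_i$ a factor of $\delta$, and invoke the $K$-property to conclude $d_i$ is duo. Your extra normalization pushing the zero diagonal entries into the tail (so that $\delta\neq 0$ and the $K$-property genuinely applies) is a small point of care the paper glosses over, but it does not change the argument.
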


We present one more class of non-commutative rings associated with the $DK$-property.

A ring $R$ has the {\it  elementary   $DK$-property} or the ($EDK$-property) if for each  matrix $A$ over $R$ there exist invertible matrices $P$ and $Q$ of suitable sizes such that
\[
PAQ=\diag(\varepsilon_1,\ldots, \varepsilon_k, 0,\ldots,0),
\]
in which  $R\varepsilon_{i+1}R\subseteq R\varepsilon_i\cap \varepsilon_iR$ for all $i=1,\dots, k-1$ and  $\varepsilon_{1},\ldots, \varepsilon_{k-1}$ are  duo elements of $R$.

Evidently, elementary divisor rings with the $DK$-property, simple elementary divisor rings and elementary divisor duo-rings are examples of  rings with the $EDK$-property.
\smallskip

Note that to date  we do not know examples of non-commutative rings that are $EDK$-rings but are not $DK$-rings. But for $PI$-rings holds the following.

\smallskip

\begin{theorem}\label{ThR:3}
A principal ideal domain  $R$ is an elementary divisor ring with the $DK$-property if and only if  $R$ is a  ring with the $EDK$-property.
\end{theorem}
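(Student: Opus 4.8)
The plan is to prove both implications, the forward one being nearly immediate and the reverse one being the substantive direction.

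For the forward implication, suppose $R$ is an elementary divisor ring with $DK$-property. Then $R$ is in particular an elementary divisor ring, and by Theorem~\ref{ThR:2}, for each matrix $A$ over $R$ there exists a diagonal matrix $C=\diag(\varepsilon_1,\ldots,\varepsilon_k,0,\ldots,0)$ with $A\sim C$, with $\varepsilon_1,\ldots,\varepsilon_{k-1}$ duo elements, and with $R\varepsilon_{i+1}R\subseteq R\varepsilon_i\cap\varepsilon_iR$ for all $i$. Writing $A\sim C$ as $PAQ=C$ for suitable invertible $P,Q$ produces exactly the defining data of the $EDK$-property. So here essentially nothing is to be done beyond quoting Theorem~\ref{ThR:2}.

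For the reverse implication, assume the principal ideal domain $R$ has the $EDK$-property; I must recover both that $R$ is an elementary divisor ring with $DK$-property. The elementary divisor ring part is free: the $EDK$-property explicitly asserts a diagonal reduction $PAQ=\diag(\varepsilon_1,\ldots,\varepsilon_k,0,\ldots,0)$ with the total divisibility chain $R\varepsilon_{i+1}R\subseteq R\varepsilon_i\cap\varepsilon_iR$, which is precisely Kaplansky's definition in \eqref{FFF}. It remains to extract the $DK$-property, i.e. the $D$-property and the $K$-property, from the hypothesis that $R$ is a PID admitting such reductions. First I would establish the $D$-property: given $a\in R$, apply the $EDK$-reduction to the $1\times 1$ matrix $(a)$ (or, if one prefers to stay in the square case, to $\diag(a,1)$ or $\diag(a,a)$), obtaining that $a$ is, up to associates on both sides, a duo element $\varepsilon$ — and for a duo element $\varepsilon$ one has $R\varepsilon R=\varepsilon R=R\varepsilon$, so $RaR$ is principal. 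One must check carefully that passing from $a$ to an equivalent duo $\varepsilon$ preserves the two-sided ideal $RaR$: this is where the invertibility of $P,Q$ and the total-divisor condition enter, and it is essentially the content of Theorem~\ref{ThR:1}(i) applied in this degenerate setting. For the $K$-property, I would take a duo element $d\in R$ and a factorization $d=bc$, and show $b$ (equivalently $c$) is duo. The idea is to apply the $EDK$-reduction to a $2\times 2$ matrix built from $b$ and $c$ — for instance $\begin{pmatrix} b & 0 \\ 0 & c\end{pmatrix}$ or a companion-type matrix with determinant associate to $d$ — and read off from the resulting diagonal $\diag(\varepsilon_1,\varepsilon_2)$ that, because $R$ is a PID (so factorizations are controlled and the product $\varepsilon_1\varepsilon_2$ is associate to $d=bc$), the factors $b,c$ must themselves be associates of duo elements, hence duo. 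The PID hypothesis is essential here: it guarantees that the invariant factors of such a matrix are genuinely the "same" data as the original factorization up to units, so that duo-ness transfers.

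The main obstacle I anticipate is the $K$-property step: one needs to arrange a matrix whose $EDK$-reduction genuinely forces a \emph{given} factorization $d=bc$ (not merely some factorization) to have duo factors, and then to argue that being equivalent (as a matrix, via two-sided multiplication by units) to a duo element implies being an associate of a duo element — and that associates of duo elements are duo. This last point is routine ($u\varepsilon v$ with $u,v$ units and $\varepsilon$ duo: $u\varepsilon v R = u\varepsilon R = uR\varepsilon = R u\varepsilon = Ru\varepsilon v$, using $\varepsilon R=R\varepsilon$ and that left/right multiplication by a unit fixes $R$), but threading it through the matrix reduction while keeping track of which factor is which requires care. I would lean on Theorem~\ref{ThR:1} to normalize the relevant $2\times 2$ matrices into triangular form before extracting the diagonal, and on the PID structure (existence and essential uniqueness of factorizations into atoms, bounded length) to conclude that the duo property propagates to every factor.
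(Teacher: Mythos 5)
Your forward direction (quoting Theorem~\ref{ThR:2}) is exactly what the paper does. The gap is in the converse, specifically in your mechanism for the $K$-property. You propose to apply the $EDK$-reduction to $\diag(b,c)$ built from a factorization $d=bc$ of a duo element and to ``read off'' that $b$ and $c$ are associates of the duo invariant factors. This fails at two points. First, $\diag(b,c)$ is in general not in the canonical form \eqref{FFF} (that would require $RcR\subseteq bR\cap Rb$), so its invariant factors need not be $b$ and $c$ at all: over $\mathbb{Z}$ already $\diag(2,3)\sim\diag(1,6)$. Second, even when the matrix is in canonical form, uniqueness of invariant factors over a noncommutative PID holds only up to \emph{similarity} ($R/zR\cong R/xR$ as right modules), not up to two-sided association $z=uxv$ with $u,v$ units; so you cannot conclude that a divisor is an \emph{associate} of a duo element, and your (correct) observation that associates of duo elements are duo does not apply. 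The paper repairs both points at once: take a duo $a$ and an arbitrary divisor $z$ of $a$; by Lemma~\ref{LLL:2}(iii) $z$ is then automatically a left \emph{and} right divisor of $a$, so $RaR\subseteq zR\cap Rz$ and $\diag(z,a)$ is \emph{already} in form \eqref{FFF}; comparing with the $EDK$-reduction $\diag(x,b)$ (with $x$ duo) and invoking Amitsur's uniqueness theorem yields $R/zR\cong R/xR$, whence $z$ is duo because duo-ness is preserved under similarity (\cite[Exercise 4.2.16]{9-cohn}). You would need these two external inputs (Amitsur's theorem, and the transfer of duo-ness along similarity rather than association) to close your argument.

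A smaller point: your $D$-property step as written does not work for the $1\times 1$ matrix $(a)$ or for $\diag(a,1)$, since the $EDK$-definition imposes the duo condition only on $\varepsilon_1,\dots,\varepsilon_{k-1}$ and says nothing about the last invariant factor; of your three candidates only $\diag(a,a)$ succeeds, giving $RaR=\varepsilon_1R=R\varepsilon_1$ via Lemma~\ref{LLL:1} (not via Theorem~\ref{ThR:1}(i), which requires stable range $1$ that a PID need not have). The paper sidesteps this entirely by citing that a principal ideal domain automatically has the $D$-property, so only the $K$-property needs to be extracted from the $EDK$-hypothesis.
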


Our next main result is the following.

\begin{theorem}\label{ThR:4}
A Hermite ring $R$ has the $EDK$-property if and only if  each $A\in R^{2\times 2}$ is equivalent to $\diag(\varepsilon,a)\in R^{2\times 2}$, in which $a\in R$ and
\[
\text{either}\quad   RaR\subseteq\varepsilon R=R\varepsilon \quad \text{or}\quad \varepsilon=0.
\]
\end{theorem}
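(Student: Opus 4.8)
The forward implication is immediate. If $R$ has the $EDK$-property, then any $A\in R^{2\times2}$ is equivalent to a matrix $\diag(\varepsilon_1,\ldots,\varepsilon_k,0,\ldots,0)$ of the shape prescribed in the definition; for a $2\times2$ matrix this is one of $\diag(\varepsilon_1,\varepsilon_2)$, $\diag(\varepsilon_1,0)$, $\diag(0,0)$. If the first occurs with $\varepsilon_1\neq0$, then $\varepsilon_1$ is duo and $R\varepsilon_2R\subseteq R\varepsilon_1\cap\varepsilon_1R=\varepsilon_1R$, so $\varepsilon:=\varepsilon_1$, $a:=\varepsilon_2$ satisfy $RaR\subseteq\varepsilon R=R\varepsilon$. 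In all remaining cases $A$ is equivalent to some $\diag(\varepsilon_1,0)$ (with $\varepsilon_1$ possibly $0$), hence, after interchanging the two rows and the two columns, to $\diag(0,\varepsilon_1)$, and $\varepsilon:=0$, $a:=\varepsilon_1$ work.

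For the converse I would first record two facts about the Hermite (hence B\'ezout) ring $R$ with $D$-property. (a) For a matrix $B$ the ideal $B_*$ is an invariant of the equivalence class of $B$, since for invertible $P,Q$ the two-sided ideal generated by the entries of $PBQ$ coincides with the one generated by the entries of $B$. (b) $B_*$ is a principal two-sided ideal generated by a duo element: the finitely generated right ideal spanned by the entries of $B$ equals $eR$ for some $e$, so $B_*=ReR=e_*R=Re_*$ with $e_*$ duo, by the $D$-property. Next, the hypothesis of the theorem says exactly that every $2\times2$ matrix over $R$ is equivalent to a diagonal matrix $\diag(d_1,d_2)$ with $Rd_2R\subseteq Rd_1\cap d_1R$ (the alternative ``$\varepsilon=0$'' becoming $\diag(a,0)$ after a swap). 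By the Kaplansky-type criterion for Hermite rings \cite{7-kaplansky,1-zabavsky}, $R$ is then an elementary divisor ring; in particular any $A\in R^{n\times m}$ is equivalent to an elementary divisor normal form $\diag(d_1,\ldots,d_k,0,\ldots,0)$, for which $Rd_jR\subseteq Rd_1R$ for all $j$ and $A_*=Rd_1R$.

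The heart of the proof is then an induction on $\min(n,m)$ establishing the strengthened statement: every $A\in R^{n\times m}$ is equivalent to $\diag(\varepsilon_1,\ldots,\varepsilon_k,0,\ldots,0)$ with $\varepsilon_1,\ldots,\varepsilon_{k-1}$ duo, $R\varepsilon_{i+1}R\subseteq R\varepsilon_i\cap\varepsilon_iR$ for $i=1,\ldots,k-1$, and $R\varepsilon_1R=A_*$. For $\min(n,m)\le1$ this is just the Hermite property. For $\min(n,m)=s\ge2$, pass to an elementary divisor normal form $\diag(d_1,\ldots,d_k,0,\ldots,0)$ and argue by a secondary induction on the rank $k$; if $k\le1$ the form already has the required shape. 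If $k\ge2$, apply the hypothesis to the leading $2\times2$ block: $\diag(d_1,d_2)\sim\diag(\varepsilon_1,\delta)$ with either $\varepsilon_1=0$, or $\varepsilon_1$ duo and $R\delta R\subseteq\varepsilon_1R=R\varepsilon_1$. In the first case $\diag(d_1,d_2)\sim\diag(\delta,0)$, so $A$ is equivalent to a matrix of rank $\le k-1$, and the secondary induction finishes. In the second case $R\varepsilon_1R$ equals the coboundary of $\diag(d_1,d_2)$, which is $Rd_1R+Rd_2R=Rd_1R$; hence $A\sim\diag(\varepsilon_1)\oplus B$ with $B:=\diag(\delta,d_3,\ldots,d_k,0,\ldots,0)\in R^{(n-1)\times(m-1)}$ and $B_*=R\delta R+\sum_{j\ge3}Rd_jR\subseteq R\varepsilon_1R=R\varepsilon_1$. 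The primary inductive hypothesis applied to $B$ gives $B\sim\diag(\eta_1,\ldots,\eta_t,0,\ldots,0)$ with the listed properties and $R\eta_1R=B_*$; since $\varepsilon_1$ is duo, $R\eta_1R=B_*\subseteq R\varepsilon_1=R\varepsilon_1\cap\varepsilon_1R$, and therefore $A\sim\diag(\varepsilon_1,\eta_1,\ldots,\eta_t,0,\ldots,0)$ has all the required properties, with $R\varepsilon_1R=Rd_1R=A_*$. This yields the $EDK$-property.

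The step I expect to require the most care is the interaction of the two inductions around the alternative ``$\varepsilon=0$'': one must check that meeting it really does strictly lower the rank (so the secondary induction terminates) and that the auxiliary conclusion $R\varepsilon_1R=A_*$ survives both inductions, since it is precisely what is needed to re-establish the total-divisibility $R\varepsilon_2R\subseteq R\varepsilon_1\cap\varepsilon_1R$ once the leading entry $d_1$ has been replaced. A second, more cosmetic, point is whether to invoke the implication ``Hermite plus $2\times2$ diagonal reduction $\Rightarrow$ elementary divisor ring'' as a citation or to reprove it here through the usual triangularization argument over Hermite rings.
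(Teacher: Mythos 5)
Your argument follows the same route as the paper, which in fact gives no details at all: it simply states that the proof is ``the same as the proof of \cite[Theorem 5.1]{7-kaplansky}'', keeping track of the duo elements. What you have written is a faithful, fleshed-out version of exactly that Kaplansky induction, including the two details the paper leaves silent: that the forward direction must route the rank-$\le 1$ case through $\diag(0,\varepsilon_1)$ (since $\varepsilon_k$ is not required to be duo), and that the invariant $R\varepsilon_1R=A_*$ (Lemma~\ref{LLL:1}) is what propagates the total-divisor condition $R\eta_1R=B_*\subseteq R\varepsilon_1\cap\varepsilon_1R$ across the splitting $A\sim\diag(\varepsilon_1)\oplus B$. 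That part of your argument is correct and complete.

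The one point you flag yourself is indeed the only loose end: termination of the secondary induction when the alternative $\varepsilon=0$ occurs. In that branch you obtain $A\sim\diag(a,d_3,\dots,d_k,0,\dots,0)$ with at most $k-1$ nonzero diagonal entries, but this matrix is no longer in elementary divisor normal form, and before you can apply the $2\times2$ hypothesis to its leading block you must re-normalize; a priori the new normal form could have more than $k-1$ nonzero entries, and ``rank'' is not an invariant you have defined for matrices over such a ring. The fix is to observe that the normalization of an \emph{already diagonal} matrix (the sorting phase of Kaplansky's argument) uses only permutations of the diagonal entries and $2\times2$ reductions applied to pairs of diagonal entries, and under the hypothesis of the theorem each such reduction returns a diagonal matrix $\diag(\varepsilon,a)$ -- so the number of nonzero diagonal entries never increases during re-normalization. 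Equivalently, one can recast the secondary induction as an induction on the number of nonzero entries of a diagonal representative, noting that passage to normal form does not increase that number. With that observation inserted, your double induction terminates and the proof is complete; without it, the step ``the secondary induction finishes'' is an assertion rather than an argument.
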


The next  result extends a well-known  Kaplansky's  criterion \cite[Theorem 5.2,  p.\, 472]{7-kaplansky} for non-commutative Hermite rings.
\begin{theorem}\label{New_Cor:5} %\label{LLL:5}
A Hermite ring $R$ with the $DK$-property  is an elementary divisor ring if and only if for each  $a,b,c\in R$ with the property
\begin{equation}\label{EQQ:3}
RaR+RbR+RcR=R
\end{equation}
there exist $p,q\in R$ such that $paR+(pb+qc)R=R$.
\end{theorem}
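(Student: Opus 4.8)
The plan is to reduce both directions to the $2\times2$ case and to invoke Theorems~\ref{ThR:2} and~\ref{ThR:4} (using also that the $EDK$-property trivially entails the elementary divisor property, since the inclusions $R\varepsilon_{i+1}R\subseteq R\varepsilon_i\cap\varepsilon_iR$ are precisely the total-divisor conditions, and that the coboundary $A_*$ is an invariant of the equivalence class of $A$, as each entry of $PAQ$ lies in $A_*$ and conversely after applying $P^{-1},Q^{-1}$). For necessity, suppose $R$ is an elementary divisor ring with $DK$-property and let $a,b,c\in R$ satisfy $RaR+RbR+RcR=R$. Set $A=\left(\begin{smallmatrix}a&b\\0&c\end{smallmatrix}\right)$, so $A_*=RaR+RbR+RcR=R$. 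By Theorem~\ref{ThR:2}, $A\sim\diag(\varepsilon_1,\varepsilon_2)$ with $\varepsilon_1$ a duo element and $R\varepsilon_2R\subseteq R\varepsilon_1\cap\varepsilon_1R$; comparing coboundaries gives $R\varepsilon_1R=R$, and since $\varepsilon_1$ is duo we get $\varepsilon_1R=R\varepsilon_1=R\varepsilon_1R=R$, whence $\varepsilon_1\in U(R)$ and $A\sim\diag(1,\varepsilon_2)$. Pick $P=(p_{ij}),Q\in\GL_2(R)$ with $PAQ=\diag(1,\varepsilon_2)$, i.e.\ $PA=\diag(1,\varepsilon_2)Q^{-1}$; the first row of the left side equals $(p_{11}a,\,p_{11}b+p_{12}c)$, while the first row of the right side is the first row of the invertible matrix $Q^{-1}$ and hence generates $R$ as a right ideal. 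So $p:=p_{11}$ and $q:=p_{12}$ satisfy $paR+(pb+qc)R=R$.

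For sufficiency, suppose $R$ is a Hermite ring (hence B\'ezout) with $DK$-property for which the displayed implication holds. By Theorem~\ref{ThR:4} it is enough to show that every $A\in R^{2\times2}$ is equivalent to $\diag(\varepsilon,a)$ with $RaR\subseteq\varepsilon R=R\varepsilon$ or $\varepsilon=0$. Using the Hermite property, reduce the first column of $A$ and bring $A$ to a triangular matrix $B=\left(\begin{smallmatrix}\alpha&\beta\\0&\gamma\end{smallmatrix}\right)$. Now extract the coboundary: by the $D$-property $R\alpha R=\alpha_*R=R\alpha_*$, and likewise for $\beta,\gamma$, so $B_*=\alpha_*R+\beta_*R+\gamma_*R$ is a principal right ideal by B\'ezout-ness, and its generator — a left factor of the duo elements $\alpha_*,\beta_*,\gamma_*$ — is, by the $K$-property, a duo element $\varepsilon$. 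After peeling $\varepsilon$ off (writing $\alpha=\varepsilon\alpha'$, etc., which is legitimate because $\alpha,\beta,\gamma\in B_*=\varepsilon R$) one is reduced to diagonalising a triangular matrix whose coboundary is all of $R$. There the hypothesis yields $p,q$ with $paR+(pb+qc)R=R$; then $(p,q)$ is a right-unimodular row, so it extends (Hermite) to the first row of some $W\in\GL_2(R)$, the matrix $W\left(\begin{smallmatrix}a&b\\0&c\end{smallmatrix}\right)$ has right-unimodular first row $(pa,\,pb+qc)$, this row is column-reduced to $(1,0)$, and clearing the remaining entry produces $\diag(1,\ast)$. Recombining with $\varepsilon$ yields the required form $\diag(\varepsilon,a)$, and Theorem~\ref{ThR:4} finishes the proof.

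The delicate point is the coboundary-extraction step in the sufficiency direction, because non-commutatively one cannot simply pull a ``scalar'' through an equivalence: the identity $P(\varepsilon B)Q=\varepsilon(PBQ)$ fails unless $\varepsilon$ is central, so the reduction must be organised so that the generator produced by the $D$-property is removed by genuinely one-sided operations, and the $K$-property must be applied to exactly the right element to guarantee that it is a duo element and not merely a generator of a principal two-sided ideal. The distinction matters: a duo element generating $R$ as a two-sided ideal is necessarily a unit, whereas a general generator of such an ideal need not be, and it is precisely this gap that the $DK$-property is designed to control; the remaining step of converting a relation $spa+t(pb+qc)=1$ into explicit invertible matrices effecting $\left(\begin{smallmatrix}\cdot&\cdot\\0&\cdot\end{smallmatrix}\right)\sim\diag(1,\ast)$ is then routine.
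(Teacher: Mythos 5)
Your proposal follows the paper's own argument in both directions: for necessity you diagonalize $\left(\begin{smallmatrix}a&b\\0&c\end{smallmatrix}\right)$ via Theorem~\ref{ThR:2}, observe that the duo first invariant generates $R$ as a two-sided ideal and is therefore a unit, and read $p,q$ off the first row of $P$; for sufficiency you reduce to the case $A_*=R$ (your coboundary-extraction step is exactly the paper's Lemma~\ref{LLL:4}, which uses Lemma~\ref{LLL:3} to choose the factorization $\alpha=\varepsilon\alpha'$ so that the quotient matrix really has coboundary $R$) and then complete the unimodular row $(p,q)$ and column $(x,y)^T$ to invertible matrices, just as the paper does. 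The differences are only in bookkeeping, and your write-up shares the paper's implicit assumption in the necessity direction that the second diagonal entry is nonzero, which is what lets Theorem~\ref{ThR:2} guarantee that $\varepsilon_1$ is duo.
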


Let $a\in R$ such that $RaR=R$. There  exist $n\in \mathbb{N}$ and $u_1$, \dots, $u_n$, $v_1$, \dots, $v_n\in R$ such that
    \begin{equation}\label{eq-1}
     u_1av_1+u_2av_2+\dots+u_nav_n=1.
    \end{equation}
If $n\in \mathbb{N}$ is the minimal number satisfying  \eqref{eq-1}, then $a\in R$ is called {\it $n$-simple}. We mostly  concentrate  on  $2$-simple elements of the ring $R$. Recall that an element $a\in R$ is called {\it $2$-simple} if $u_1av_1+u_2av_2=1$ for some $u_1,u_2,v_1,v_2\in R$ and $n=2$ is minimal.

The structure of  elements of an  elementary divisor domain  with the  $DK$-property is presented by the following.

\begin{theorem}\label{ThR:5}
Let $R$ be an elementary divisor domain. If   $R$ has the $DK$-property, then  every   $a\in R\setminus\{0\}$ can be  written in the form $a=\alpha b=c\alpha$, where $\alpha\in U(R)$  and $b, c\in R$ are  $2$-simple elements.
\end{theorem}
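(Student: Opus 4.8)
The plan is to apply Theorem~\ref{ThR:2} to the matrix $\diag(a,a)\in R^{2\times 2}$ and to read off the required representation from the upper-left corner of its diagonal reduction. First I would note that, since $R$ is a domain and $a\neq 0$, left multiplication by $\diag(a,a)$ is an injective endomorphism of $R^{2}$; as equivalence of matrices preserves injectivity, the diagonal form produced by Theorem~\ref{ThR:2} has no zero entry. Hence there exist $P,Q\in\GL_2(R)$ with
\[
P\,\diag(a,a)\,Q=\diag(\varepsilon_1,\varepsilon_2),\qquad \varepsilon_1 \text{ a duo element},\qquad R\varepsilon_2R\subseteq R\varepsilon_1\cap\varepsilon_1R.
\]

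The next step is to pass to coboundaries. Equivalent matrices have the same coboundary (each entry of one lies in the two-sided ideal generated by the entries of the other), so $R\varepsilon_1R+R\varepsilon_2R=(\diag(a,a))_*=RaR$, and since $R\varepsilon_2R\subseteq\varepsilon_1R\subseteq R\varepsilon_1R$ this forces $R\varepsilon_1R=RaR$. Because $\varepsilon_1$ is duo we have $R\varepsilon_1R=\varepsilon_1R=R\varepsilon_1$; thus, in the case $RaR=R$, the element $\varepsilon_1$ is both left and right invertible and so $\varepsilon_1\in U(R)$.

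Now I would compare the $(1,1)$-entries of $P\,\diag(a,a)\,Q=\diag(\varepsilon_1,\varepsilon_2)$, which gives an identity $p_1aq_1+p_2aq_2=\varepsilon_1$; left multiplying by $\varepsilon_1^{-1}$ turns it into $p'_1aq_1+p'_2aq_2=1$, so $a$ admits a two-term representation. Since a domain is Dedekind-finite, a one-term identity $paq=1$ would make $a$ a unit; hence, whenever $a\notin U(R)$, the representation just obtained has minimal length, i.e.\ $a$ is $2$-simple. Finally, for any $\alpha\in U(R)$ I would set $b:=\alpha^{-1}a$ and $c:=a\alpha^{-1}$, so that $a=\alpha b=c\alpha$; one has $RbR=RcR=RaR=R$, neither $b$ nor $c$ is a unit, and multiplying $p'_1aq_1+p'_2aq_2=1$ by $\alpha$ on the appropriate side exhibits two-term identities for $b$ and for $c$. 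Therefore $b$ and $c$ are $2$-simple, which is the assertion.

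The hard part will be arranging matters so that $RaR=R$ is available exactly where it is used: this condition is what forces $\varepsilon_1$ to be a unit, and it is already implicit in the notion of a $2$-simple element, so one must derive it from the $DK$-property together with the domain hypothesis over the relevant range of $a$ (and dispose of the degenerate case in which $a$ is itself a unit). Granting this normalization, the corner-extraction argument above is routine and finishes the proof.
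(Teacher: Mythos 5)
Your matrix argument --- reducing $\diag(a,a)$ via Theorem~\ref{ThR:2}, reading off the $(1,1)$-corner identity $p_{11}aq_{11}+p_{12}aq_{21}=\varepsilon_1$, and normalizing by $\varepsilon_1^{-1}$ --- is exactly the engine of the paper's own proof (which writes the reduction as $\diag(a,a)P=Q\diag(z,d)$ and extracts a two-term expression of $1$ from the invertibility of $Q$); your two refinements, the injectivity argument excluding a zero on the diagonal and the Dedekind-finiteness argument for minimality of $n=2$, are correct and are actually cleaner than what the paper records. The genuine gap is precisely the step you flag as ``the hard part'': you never obtain $RaR=R$, and you cannot, because it is false in general. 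Indeed, if $a=\alpha b$ with $\alpha\in U(R)$ and $b$ $2$-simple, then $RaR=R\alpha bR=RbR=R$, so the statement read literally with $\alpha$ a unit forces every nonzero element to generate $R$ as a two-sided ideal, i.e.\ forces $R$ to be simple. Already $R=\mathbb{Z}$ (an elementary divisor domain in which every element is duo, hence a $DK$-ring) refutes this for $a=2$. So the normalization you hope to ``derive from the $DK$-property together with the domain hypothesis'' does not exist, and as it stands your argument only covers the case $RaR=R$.

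The missing idea is the opening move of the paper's proof (compare Lemma~\ref{LLL:7}): the element $\alpha$ in the factorization is not an arbitrary unit but the duo generator of the coboundary, $RaR=\alpha R=R\alpha$ supplied by the $D$-property (so the ``$\alpha\in U(R)$'' in the statement should be read as ``$\alpha$ a duo element''). Writing $a=\alpha b=c\alpha$, one has $\alpha=\sum_i u_iav_i=\sum_i u_i\alpha bv_i$ for suitable $u_i,v_i$, pushes $\alpha$ to the left using that it is duo, and cancels it (this is where the domain hypothesis enters) to conclude $RbR=R$, and symmetrically $RcR=R$. Your corner-extraction argument should then be run on $\diag(b,b)$ and $\diag(c,c)$ rather than on $\diag(a,a)$: for these matrices the coboundary is all of $R$, your $\varepsilon_1$ is a unit by exactly the reasoning you give, and the rest of your proof goes through verbatim. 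Without this preliminary reduction the proof does not close. (You should also note that when $b$ is itself a unit the conclusion degenerates to $1$-simplicity --- a defect the paper shares.)
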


    In the case of a ring of stable range $1$, we have the following.

\begin{theorem}\label{NewT:6} %\label{LLL:6}
Let $R$ be a ring of stable range $1$. If $a\in R$ is a $2$-simple element, then
\[
\mathrm{diag}(a,a)\sim\mathrm{diag}(1,\Delta),\qquad\quad   (\Delta \in R).
\]
\end{theorem}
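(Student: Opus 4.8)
The plan is to use the $2$-simplicity relation to manufacture a unit in the upper-left corner of $\mathrm{diag}(a,a)$, and then clear everything else by two elementary transvections. First I would unpack the hypothesis: there are $u_1,u_2,v_1,v_2\in R$ with
\[
u_1av_1+u_2av_2=1 .
\]
Since $u_iav_i\in u_iR$, this relation forces $u_1R+u_2R=R$, so the row $(u_1,u_2)$ is right unimodular; since $u_iav_i\in Rv_i$, it also forces $Rv_1+Rv_2=R$, so the column $(v_1,v_2)^{T}$ is left unimodular.

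Next I would exploit stable range $1$. Recall that a ring of stable range $1$ is Dedekind finite, so a one-sided unit is automatically a two-sided unit. Applying the stable range $1$ condition to $u_1R+u_2R=R$ gives $t\in R$ with $w:=u_1+u_2t$ right invertible, hence a unit; the invertible column operations
\[
(u_1,u_2)\mapsto (w,u_2)\mapsto (1,u_2)\mapsto (1,0)
\]
(first add $t$ times the first entry, then rescale by $w^{-1}$, then subtract $u_2$ times the first entry) show that $(u_1,u_2)M=(1,0)$ for some $M\in\GL_2(R)$, so $(u_1,u_2)$ is the first row of $P:=M^{-1}\in\GL_2(R)$. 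The transpose of this argument, using that stable range $1$ is left--right symmetric, produces $Q\in\GL_2(R)$ whose first column is $(v_1,v_2)^{T}$.

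Now a direct computation shows that the $(1,1)$-entry of $P\,\mathrm{diag}(a,a)\,Q$ equals $u_1av_1+u_2av_2=1$, so $\mathrm{diag}(a,a)\sim\begin{pmatrix}1&x\\y&z\end{pmatrix}$ for some $x,y,z\in R$. Multiplying on the left by $\begin{pmatrix}1&0\\-y&1\end{pmatrix}$ and on the right by $\begin{pmatrix}1&-x\\0&1\end{pmatrix}$ clears the off-diagonal entries and yields $\mathrm{diag}(1,\Delta)$ with $\Delta:=z-yx$. Hence $\mathrm{diag}(a,a)\sim\mathrm{diag}(1,\Delta)$, as required.

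The step I expect to be the crux is the completion of a unimodular row (respectively column) of length $2$ to an invertible matrix over a ring of stable range $1$: this is precisely where one needs Dedekind finiteness, so that the element $w=u_1+u_2t$ supplied by the stable range condition, a priori only right invertible, is a genuine unit, together with the left--right symmetry of the stable range $1$ hypothesis so that the column case is handled as well. Everything after the corner entry has been reduced to $1$ is the routine two-step diagonalization by transvections.
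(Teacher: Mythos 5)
Your proof is correct, but it takes a genuinely different route from the paper's. You complete the right-unimodular row $(u_1,u_2)$ and the left-unimodular column $(v_1,v_2)^{T}$ to invertible matrices $P$ and $Q$ (using stable range $1$ plus Dedekind-finiteness for the row, and the left--right symmetry of stable range $1$ for the column), so that the corner entry of $P\,\diag(a,a)\,Q$ is literally $u_1av_1+u_2av_2=1$; two transvections then finish. The paper instead never touches the $u_i,v_i$ directly: it applies the stable range $1$ condition three times in succession, first to $u_1aR+u_2aR=R$ to get a unit $w_1=u_1a+u_2at$, then to $Ra+Rt=R$ to replace $t$ by $w_2-xa$ with $w_2$ a unit, and finally to the resulting relation $Ra+Raw_2=R$ to obtain $sa+aw_2=w_3\in U(R)$; this last identity is realized by explicit elementary row and column operations on $\diag(a,a)$, producing the unit $w_3$ in a corner. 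Your argument is conceptually shorter and isolates the one genuinely nontrivial ingredient (completion of a length-$2$ unimodular row/column over a stable range $1$ ring), at the cost of invoking two standard but external facts, namely that stable range $1$ implies one-sided units are units and that stable range $1$ is left--right symmetric; the paper's chain of substitutions keeps everything at the level of $2\times 2$ elementary matrices, though it too tacitly uses both of those facts (it asserts $w_1,w_2,w_3\in U(R)$ and freely switches between the left and right stable range conditions). One cosmetic slip: in your column-operation description, the first move adds $t$ times the \emph{second} entry to the first, not ``$t$ times the first entry''; the displayed maps are nonetheless correct.
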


Please note that the question still remains open whether a non-commutative ring of stable range $1$ will be   an elementary divisor ring.

Now we use the following definition. A ring $R$ has the {\it $L$-property} if it follows  from the  condition $RaR=R$  that  $a\in U(R)$.

\begin{theorem}\label{ThR:6}
Let $R$ be a domain with the $D$-property. If  $R$ has an   $L$-property, then $R$ is a duo domain.
\end{theorem}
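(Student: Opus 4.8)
The plan is to show that every element of $R$ is duo by comparing, for a given $a\in R$, the element $a$ with the generator $a_*$ of its coboundary supplied by the $D$-property, and then showing that the ``cofactors'' relating $a$ and $a_*$ are units by invoking the $L$-property.

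First I would dispose of the case $a=0$, which is trivially duo, and assume $a\neq 0$. By the $D$-property there is $a_*\in R$ with $RaR=a_*R=Ra_*$, and $a_*\neq 0$ because $a=1\cdot a\cdot 1\in RaR$. Since $a\in RaR=a_*R$ I can write $a=a_*r$, and since $a\in RaR=Ra_*$ I can write $a=sa_*$, for suitable $r,s\in R$.

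Next I would compute the coboundary $RaR$ in two ways. Substituting $a=a_*r$ and using $Ra_*=a_*R$ to slide $a_*$ to the front gives $RaR=Ra_*rR=(Ra_*)rR=(a_*R)rR=a_*(RrR)$; likewise, substituting $a=sa_*$ gives $RaR=Rsa_*R=Rs(a_*R)=Rs(Ra_*)=(RsR)a_*$. Comparing these with $RaR=a_*R=Ra_*$ yields the set equalities $a_*(RrR)=a_*R$ and $(RsR)a_*=Ra_*$. Here the domain hypothesis enters decisively: since $a_*\neq 0$ and $R$ has no zero divisors, cancelling $a_*$ on the left (respectively on the right) forces $RrR=R$ (respectively $RsR=R$). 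The $L$-property then gives $r\in U(R)$ and $s\in U(R)$. Consequently $aR=a_*rR=a_*R$ and $Ra=Rsa_*=Ra_*$, so $aR=a_*R=Ra_*=Ra$; as $a$ was arbitrary, $R$ is a duo domain.

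I expect the only genuinely delicate point to be the cancellation step: one must check that the displayed relations really are equalities of subsets of $R$ (for instance that $a_*(RrR)=\{a_*z : z\in RrR\}$ coincides with $a_*R$ as a set), so that cancelling the nonzero element $a_*$ is legitimate and yields $R\subseteq RrR$ exactly. Everything else is a direct manipulation of one-sided and two-sided ideals using only $Ra_*=a_*R$, and no case analysis beyond $a=0$ versus $a\neq 0$ is required.
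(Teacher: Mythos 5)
Your argument is correct and is essentially the paper's own: the paper routes the same computation through Lemma~\ref{LLL:7}, writing $a=\alpha b=c\alpha$ with $\alpha$ the duo generator of $RaR$ and showing $RbR=RcR=R$ by cancelling the nonzero $\alpha$ in the domain, after which the $L$-property makes $b,c$ units and $aR=\alpha R=R\alpha=Ra$. Your $a_*$, $r$, $s$ play exactly the roles of $\alpha$, $b$, $c$, and your cancellation step is the same one the paper uses.
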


Note  that  a ring is called a {\it quasi-duo ring} if   every maximal one-sided ideal is a two-sided ideal. Every  $n$-simple element is invertible in a quasi-duo domain; and in a quasi-duo elementary divisor domain the $D$-property is always satisfied  \cite[Theorem~1]{2-zabkom}.

As a consequence of the previous theorem we have the following.

\begin{corollary}\label{theor-15} (see \cite[Theorems 1 and 2]{2-zabkom})
A quasi-duo B\'ezout domain $R$ is an elementary divisor domain if and only if $R$ is a duo domain.
\end{corollary}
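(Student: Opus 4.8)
The plan is to deduce Corollary~\ref{theor-15} from Theorem~\ref{ThR:6} together with the facts recalled immediately before it. First I would prove the forward implication. Suppose $R$ is a quasi-duo B\'ezout domain that is an elementary divisor domain. By \cite[Theorem~1]{2-zabkom} every quasi-duo elementary divisor domain has the $D$-property, so $R$ satisfies the hypotheses of Theorem~\ref{ThR:6} once we check the $L$-property. But the $L$-property is precisely the statement that $RaR=R$ forces $a\in U(R)$, and this is exactly the quoted fact that in a quasi-duo domain every $n$-simple element is invertible: if $RaR=R$ then $a$ is $n$-simple for some $n$, hence a unit. Thus Theorem~\ref{ThR:6} applies and $R$ is a duo domain.

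For the converse I would argue that a quasi-duo B\'ezout duo-domain is automatically an elementary divisor domain. The key observation is that a duo-ring trivially has the $K$-property (every factor of a duo element is duo, since in a duo-ring \emph{every} element is duo), and a duo-domain has the $D$-property (indeed $RaR=aR=Ra$ for every $a$), so a B\'ezout duo-domain is a B\'ezout ring with $DK$-property; one then invokes the reduction theory developed in the paper — concretely Theorem~\ref{New_Cor:5} — to reduce the problem to the arithmetic condition on triples $a,b,c$ with $RaR+RbR+RcR=R$. In a duo-domain this condition reads $aR+bR+cR=R$, and the sought $p,q$ with $paR+(pb+qc)R=R$ can be produced exactly as in Kaplansky's classical commutative argument, since all the ideals involved are two-sided and behave like ideals in a commutative B\'ezout domain. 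Alternatively, and perhaps more cleanly, I would cite that commutative-style B\'ezout domains satisfying the relevant one-dimensionality/adequacy hypotheses are elementary divisor domains and transport that argument verbatim, the point being that in a duo-domain left and right divisibility coincide so no genuinely noncommutative phenomena intervene.

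The main obstacle I anticipate is not in the forward direction — that is essentially a bookkeeping exercise matching ``$L$-property'' to ``$n$-simple $\Rightarrow$ unit'' — but in making the converse airtight without simply re-deriving Kaplansky's theorem. One must be careful that ``duo-domain'' alone does not obviously give the full strength of a commutative B\'ezout domain (there could be subtle issues with two-sided versus one-sided finitely generated ideals, although in a duo-ring these coincide). The safest route is therefore to route the converse through Theorem~\ref{New_Cor:5}: since a B\'ezout duo-domain has $DK$-property and is Hermite (being B\'ezout), it suffices to verify the triple condition \eqref{EQQ:3}, and in the duo setting that verification is literally Kaplansky's computation with every occurrence of ``ideal'' read as ``two-sided ideal''. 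This keeps the proof short and confines all the real work to results already established in the paper.
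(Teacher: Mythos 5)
Your forward direction is essentially the paper's own argument and is correct: in a quasi-duo domain every maximal one-sided ideal is two-sided, so $RaR=R$ forces $aR=R$ and hence (in a domain) $a\in U(R)$, which is the $L$-property; combined with the fact that a quasi-duo elementary divisor domain has the $D$-property \cite[Theorem~1]{2-zabkom}, Theorem~\ref{ThR:6} yields that $R$ is a duo domain. This half is all the paper itself actually proves; its two-line proof just records these two ingredients.

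The converse is where you have a genuine gap. You propose to show that a B\'ezout duo-domain is an elementary divisor domain by verifying the hypothesis of Theorem~\ref{New_Cor:5}: given $a,b,c$ with $RaR+RbR+RcR=R$ (in the duo case, $aR+bR+cR=R$), produce $p,q$ with $paR+(pb+qc)R=R$ ``exactly as in Kaplansky's classical commutative argument.'' No such argument exists. Kaplansky's condition is a \emph{criterion} for a (commutative or Hermite) B\'ezout ring to be an elementary divisor ring, not a property that every commutative B\'ezout domain is known to satisfy; indeed, whether every commutative B\'ezout domain is an elementary divisor domain is a long-standing open problem. Since every commutative domain is duo, your converse argument, if it worked, would settle that problem --- the observation that ``all the ideals involved are two-sided and behave like ideals in a commutative B\'ezout domain'' reduces the noncommutative case to the commutative one but gains nothing beyond it. The paper does not attempt this direction at all: the corollary is explicitly labelled as a restatement of \cite[Theorems~1 and~2]{2-zabkom}, and the converse implication is simply imported from that reference rather than re-derived here. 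To make the converse airtight within this paper's framework you would need an additional hypothesis under which the triple condition is actually verifiable --- for instance stable range $1$ together with every nonzero element being elementary, as in Theorem~\ref{ThR:8} and Corollary~\ref{Cor:8} --- or else reproduce the external proof from \cite{2-zabkom}. (A smaller point: ``Hermite because B\'ezout'' also needs the duo or stable-range hypothesis; the paper only records that B\'ezout rings of stable range $1$ are Hermite.)
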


Let $R$ be a B\'ezout domain. An element $a\in R\setminus {0}$ is called {\it finite} if each right and left ideal that contains $a$ is principal. In a B\'ezout domain this condition is equivalent to the a.c.c. for principal right and left ideals which contain the element $a\in R$.

\begin{corollary}\label{theor-17}
%{\color{red} \cite[???]{2-zabkom}}
Let $R$ be a B\'ezout domain of stable range $1$ with the $DK$-property. If it follows  from the  condition $RaR=R$  that   $a\in R$ is a finite element, then  $R$ is an elementary divisor ring.
\end{corollary}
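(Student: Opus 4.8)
The plan is to reduce this to Theorem \ref{New_Cor:5}, which gives a Kaplansky-type criterion for a Hermite ring with $DK$-property to be an elementary divisor ring. Since $R$ is a B\'ezout domain of stable range $1$, by the result cited in the introduction (namely that a non-commutative right B\'ezout ring of stable range $1$ is a right Hermite ring, together with the left-handed version) $R$ is a Hermite ring; and $R$ already has $DK$-property by hypothesis. So it suffices to verify the arithmetic condition of Theorem \ref{New_Cor:5}: for all $a,b,c\in R$ with $RaR+RbR+RcR=R$, there exist $p,q\in R$ such that $paR+(pb+qc)R=R$.

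First I would dispose of the case $a=0$: then $RbR+RcR=R$, and we need $p(0)R+(pb+qc)R=(pb+qc)R=R$, i.e. we need $pb+qc$ to be a unit. Here is where stable range $1$ enters. From $RbR+RcR=R$ one gets an equation $\sum u_ibv_i+\sum s_jct_j=1$; I would first use the B\'ezout property to replace the two-sided ideals by one-sided data, reducing to an element $b'$ with $b'R=bR$ (using that in a B\'ezout domain with $D$-property the coboundary $RbR$ is principal, generated by some $b_*$ with $b_*R=Rb_*$), so that $b_*R+c_*R=R$; then stable range $1$ yields $t$ with $(b_* + c_* t)R=R$, hence $b_*+c_*t$ is a unit since $R$ is a domain (a one-sided unit in a B\'ezout domain of stable range $1$ is a unit). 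Translating back through the duo-ness of $b_*,c_*$ produces $p,q$ with $pb+qc$ a unit. When $a\neq 0$, the hypothesis on $R$ kicks in: from $RaR+RbR+RcR=R$, if $RaR=R$ then $a$ is a finite element, but a finite element generating $R$ as a two-sided ideal in a B\'ezout domain forces... actually the cleaner route is: the condition $RaR+RbR+RcR=R$ together with $D$-property gives $a_*R + b_*R + c_*R = R$ with all three generators duo; I would then run the standard Kaplansky reduction (as in the commutative case) using stable range $1$ to first arrange $a_*R + (b_* + c_* t)R = R$, and then the finiteness hypothesis — applied to the element $b_*+c_*t$ when its coboundary is all of $R$, or more precisely tracking the two-sided ideal generated — guarantees the ascending chain condition needed to carry the classical argument through to the required $p,q$.

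The main obstacle I anticipate is the bookkeeping between two-sided and one-sided ideals. In the commutative Kaplansky argument one freely passes between $aR+bR+cR=R$ and the manipulation $p,q$; here the hypothesis and conclusion of Theorem \ref{New_Cor:5} are phrased with two-sided ideals on the hypothesis side ($RaR+RbR+RcR=R$) but one-sided ideals on the conclusion side ($paR+(pb+qc)R=R$), and the bridge is exactly the $D$-property (coboundaries are principal) plus $K$-property (factors of duo elements are duo, so the $\varepsilon_i$ stay duo through the reduction). I would need to check carefully that replacing $a$ by $a_*$, $b$ by $b_*$, $c$ by $c_*$ is legitimate — i.e. that $paR+(pb+qc)R=R$ for the modified elements implies it for the originals, or can be arranged to — and that the finiteness hypothesis is invoked for an element whose two-sided coboundary is genuinely $R$, so that Theorem \ref{ThR:6}-style reasoning (an $L$-property-like conclusion locally at $a$) applies. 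Once the translation is set up correctly, the stable-range-$1$ step and the a.c.c. from finiteness should combine to give the classical termination argument without further difficulty.
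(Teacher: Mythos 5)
There is a genuine gap. Your overall framing (reduce to the Hermite property via stable range $1$, then verify a $2\times 2$/Kaplansky-type criterion) is parallel to the paper's, but the entire mathematical content of the corollary sits in the one step you defer: showing that when $RaR=R$, the finiteness of $a$ actually produces the required $p,q$ (equivalently, the required diagonal reduction of the $2\times 2$ matrix). You describe this as ``the a.c.c.\ from finiteness should combine to give the classical termination argument without further difficulty,'' but that termination argument is a nontrivial Henriksen-type induction on divisors, and in the noncommutative setting it is precisely the substance being invoked. The paper does not re-prove it either, but it does something you do not: after using Theorem~\ref{ThR:1}(ii) and Theorem~\ref{ThR:4} to reduce to a single matrix $\left[\begin{smallmatrix} a& 0 \\ b & c \end{smallmatrix}\right]$ with $RaR=R$, it cites a specific external result (Zabavsky, \cite[Theorem 6]{10-zabavsky}; compare also \cite[Theorem 2]{6-beauregard}, quoted in the paper as ``finite elements are elementary'') that delivers exactly the reduction $A\sim\diag(f,d)$ with $RdR\subseteq fR\cap Rf$. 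Without either proving that step or naming a result that supplies it, your argument does not close.

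A secondary but real problem: you propose to apply the finiteness hypothesis to the element $b_*+c_*t$. In the standard adequate-ring/Henriksen argument the ascending chain condition is needed on principal one-sided ideals containing $a$ --- one repeatedly factors $a$ relative to $b$ and $c$, and termination comes from divisors of $a$ --- so finiteness must be invoked for $a$ itself (which is what the hypothesis $RaR=R\Rightarrow a$ finite provides after the reduction puts the coboundary generator in the corner). Applying it to $b_*+c_*t$ is misdirected, and the half-sentence ``a finite element generating $R$ as a two-sided ideal \dots forces\dots'' is abandoned; note it does \emph{not} force $a\in U(R)$ (that would be the $L$-property and would already make $R$ a duo domain by Theorem~\ref{ThR:6}). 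The $a=0$ case and the one-sided/two-sided bookkeeping you worry about are comparatively minor and handled in the paper by Lemma~\ref{LLL:4} and Theorem~\ref{ThR:1}.
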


In a more general situation as in Theorem \ref{ThR:5}, we have the following.
\begin{theorem}\label{ThR:7}
Let $R$ be a  domain with the  $D$-property.  If it follows  from the  condition $RbR=R$  that   $b\in R$ is a finite element, then any element   $a\in R\setminus {0}$ can be written as
\[
a=\alpha f=\varphi \alpha,
\]
in which  $\alpha\in R$ is a duo element and $f, \varphi\in R$ are  finite elements.
\end{theorem}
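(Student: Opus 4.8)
The plan is to mimic the proof of Theorem \ref{ThR:5}, replacing "$2$-simple" throughout by "finite" and using the hypothesis that every element $b$ with $RbR=R$ is finite in place of the hypothesis that $R$ is an elementary divisor domain of stable range $1$. Fix $a\in R\setminus\{0\}$. By the $D$-property there is $\alpha\in R$ with $RaR=\alpha R=R\alpha$; in particular $\alpha$ is a duo element, and since $R$ is a domain and $a\in RaR=\alpha R$ we may write $a=\alpha f$ for a unique $f\in R$, and similarly $a=\varphi\alpha$ for a unique $\varphi\in R$. So the factorizations $a=\alpha f=\varphi\alpha$ exist formally; the whole content is to show $f$ and $\varphi$ are \emph{finite} elements.

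First I would establish that $R f R=R$ (and symmetrically $R\varphi R=R$). The coboundary $RaR$ is a two-sided ideal, so $\alpha R=R\alpha=RaR$, and one computes $R f R$: from $a=\alpha f$ and $\alpha R=R\alpha R$, the two-sided ideal $RaR=R\alpha R= R\alpha R\cdot(\text{stuff})$ must coincide with $\alpha\cdot(RfR)$ after cancelling the duo element $\alpha$ on the left (this cancellation is where I use that $\alpha$ is duo and $R$ is a domain: $\alpha R = R\alpha$ lets one move $\alpha$ past arbitrary elements up to replacing coefficients). The upshot should be $RaR = \alpha\, (RfR)$ as left ideals, forcing $RfR=R$. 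Then the hypothesis of the theorem applies directly: $RfR=R$ implies $f$ is finite. The same argument on the other side gives $R\varphi R=R$, hence $\varphi$ is finite.

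The step I expect to be the main obstacle is precisely the "cancellation of the duo element $\alpha$": one must justify carefully that $RaR=R\alpha R=\alpha R$ together with $a=\alpha f$ yields $RfR=R$, rather than merely $\alpha RfR\subseteq\alpha R$. The cleanest route is to argue locally: any element $x$ of $R$ can be written as $x = \sum r_i a s_i$ for suitable $r_i,s_i$ when $x\in RaR$, and in particular $1\in RaR$ would be false in general — instead one uses that $RfR$ is a two-sided ideal with $\alpha\cdot RfR = RaR = \alpha R$, and since $R$ is a domain left multiplication by $\alpha\ne 0$ is injective, so $RfR=R$. I would spell out that $\alpha\cdot(RfR)=R(\alpha f)R=RaR$ uses $\alpha R=R\alpha$ to slide $\alpha$ to the left of the generators $r\,(\alpha f)\,s$, and then injectivity of $x\mapsto\alpha x$ closes the argument. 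Everything else — existence of $\alpha$, existence and uniqueness of $f,\varphi$, the symmetry between the two sides — is routine once this cancellation lemma is in hand.
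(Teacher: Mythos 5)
Your proposal is correct and follows essentially the same route as the paper: the paper derives Theorem \ref{ThR:7} from Lemma \ref{LLL:7}, whose proof is exactly your cancellation argument --- write $a=\alpha f=\varphi\alpha$ with $RaR=\alpha R=R\alpha$, use the duo property to slide $\alpha$ past the coefficients so that $\alpha\,(RfR)=R(\alpha f)R=RaR=\alpha R$, cancel $\alpha\neq0$ in the domain to get $RfR=R$, and then invoke the hypothesis that $RbR=R$ forces $b$ finite. The only cosmetic difference is that the paper works with a single relation $\sum u_i a v_i=\alpha$ rather than at the level of ideals.
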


%The list of type L and Z conditions could be continued but let us describe a more general situation.

Let $R$ be a B\'ezout domain  with the $DK$-property.    An element   $a\in R$ is called {\it elementary} if $RaR=R$  and  for each  $b,c\in R$ there exist  $p,q\in R$ such that
\[
paR+(pb+qc)R=R.
\]
Note that invertible and finite elements are  elementary  elements  by \cite[Theorem~2]{6-beauregard}.

%\newpage

Our last result is the following.

\begin{theorem}\label{ThR:8}
Let $R$ be a B\'ezout domain of stable range $1$ with the $DK$-property.   The ring  $R$ is an elementary divisor ring  if and only if each nonzero element of $R$ is elementary.
\end{theorem}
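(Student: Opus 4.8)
The plan is to prove the two implications separately, using Theorem \ref{New_Cor:5} as the bridge between "elementary divisor ring" and the combinatorial condition on triples $a,b,c$ with $RaR+RbR+RcR=R$.

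For the forward direction, suppose $R$ is a B\'ezout domain of stable range $1$ with $DK$-property which is an elementary divisor ring, and let $a\in R\setminus\{0\}$. We must show $a$ is elementary, i.e.\ that $RaR=R$ forces the existence of $p,q$ with $paR+(pb+qc)R=R$ for every $b,c\in R$. The point is that $R$ being an elementary divisor ring gives us, via Theorem \ref{New_Cor:5}, exactly the conclusion that from $Ra'R+Rb'R+Rc'R=R$ one can extract such $p,q$; so it suffices to observe that when $RaR=R$ the hypothesis $RaR+RbR+RcR=R$ is automatic for \emph{any} $b,c$, and apply Theorem \ref{New_Cor:5} with $(a',b',c')=(a,b,c)$. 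This gives $paR+(pb+qc)R=R$, which is precisely the definition of $a$ being elementary.

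For the converse, assume every nonzero element of $R$ is elementary; we show $R$ is an elementary divisor ring. Again we invoke Theorem \ref{New_Cor:5}: since $R$ is a Hermite ring (being a B\'ezout domain of stable range $1$, hence right and left Hermite by \cite[Theorem 2]{4-zabavsky}) with $DK$-property, it is enough to verify that for all $a,b,c\in R$ with $RaR+RbR+RcR=R$ there exist $p,q\in R$ with $paR+(pb+qc)R=R$. Here the $D$-property enters: replace the left ideal sum by a principal coboundary. Using $D$-property and stable range $1$, one reduces the triple $(a,b,c)$ — by left/right multiplication and the diagonal-reduction machinery of Theorem \ref{ThR:1} — to a situation in which the first entry generates the coboundary $A_*$ of the relevant matrix; when $RaR+RbR+RcR=R$ this coboundary is all of $R$, so after the reduction the controlling element $a'$ satisfies $Ra'R=R$. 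By hypothesis $a'$ is elementary (if nonzero; the zero case is trivial since then $R b' R + R c' R = R$ and one argues directly), which yields the desired $p,q$ for the reduced triple, and then tracking the reduction back produces $p,q$ for the original $(a,b,c)$.

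The main obstacle I expect is the bookkeeping in the converse: showing that "elementary" as defined for the single reduced element $a'$ transports back to the $p,q$-condition for the original triple $(a,b,c)$ under the B\'ezout/stable-range-$1$ reductions, and in particular checking that the reductions preserve the coprimality condition $RaR+RbR+RcR=R$ and the coboundary $A_*$. This is where the $DK$-property is genuinely used — the $K$-property guarantees the duo elements produced along the way behave well under the factorizations coming from stable range $1$ — and one must be careful that the "$2$-simple/finite/elementary" hierarchy (Theorems \ref{ThR:5}, \ref{NewT:6}, \ref{ThR:7}, Corollary \ref{theor-17}) is applied to elements for which the hypotheses actually hold. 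Modulo that transport lemma, the theorem follows by a direct appeal to Theorem \ref{New_Cor:5} in both directions.
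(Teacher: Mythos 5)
Your forward direction is fine and is essentially what the paper does: $R$ is Hermite by Lemma~\ref{LLL:2}(i), so Theorem~\ref{New_Cor:5} applies, and when $RaR=R$ the hypothesis $RaR+RbR+RcR=R$ holds for every $b,c$, yielding the required $p,q$. (Like the paper, you must read the definition of ``elementary'' as conditional on $RaR=R$; elements with $RaR\neq R$ satisfy it vacuously, since otherwise the statement would force $R$ to be simple.)

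The converse, however, is left genuinely incomplete: the ``transport lemma'' you defer is precisely the step that needs an argument, and you have also made it harder than necessary by insisting on recovering $p,q$ for the \emph{original} triple $(a,b,c)$ so as to verify the hypothesis of Theorem~\ref{New_Cor:5} verbatim. The paper sidesteps this by arguing at the level of matrix equivalence, which is transitive: by Theorem~\ref{ThR:4} it suffices to show that every $A\in R^{2\times 2}$ is equivalent to $\diag(\varepsilon,t)$ with $RtR\subseteq \varepsilon R=R\varepsilon$ or $\varepsilon=0$. The $D$-property enters through Lemma~\ref{LLL:4}, which strips off the duo generator of the coboundary $A_*$ and reduces everything to the case $A_*=R$; Theorem~\ref{ThR:1} (together with Lemma~\ref{LLL:7}) then puts $A$ in triangular form whose $(1,1)$ entry $a'$ satisfies $Ra'R=A_*=R$, so $a'\neq 0$ and $a'$ is elementary by hypothesis. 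This gives $p,q,u,v$ with $pa'u+(pb'+qc')v=1$; since $R$ is Hermite, the row $(p,q)$ and the column $(u,v)^T$ complete to invertible $P,Q$, so $PAQ$ has a unit in the $(1,1)$ position and is therefore equivalent to $\diag(1,t)$, for which the total-divisor condition is trivial. No back-transport of $p,q$ is needed, and the invariance of the coprimality condition that worries you is exactly Lemma~\ref{LLL:1} ($A_*$ is preserved by equivalence). As written your converse is a plan rather than a proof; with this reorganization it closes and coincides with the paper's argument.
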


As a consequence we have the following.

\begin{corollary}\label{Cor:8}
Each  quasi-duo elementary divisor domain of  stable range $1$ is a duo domain.
\end{corollary}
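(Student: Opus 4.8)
The plan is to derive the corollary from Theorem~\ref{ThR:6}. Concretely, I will verify that a quasi-duo elementary divisor domain $R$ of stable range $1$ possesses both the $D$-property and the $L$-property; Theorem~\ref{ThR:6} then yields at once that $R$ is a duo domain. The stable-range-$1$ hypothesis is exactly what puts $R$ in the ambit of Theorem~\ref{ThR:8} (a B\'ezout domain of stable range $1$ with $DK$-property), but for the present conclusion only the $D$- and $L$-properties are actually used.

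For the $D$-property: since $R$ is a quasi-duo elementary divisor domain, \cite[Theorem~1]{2-zabkom} --- recalled in the discussion following Theorem~\ref{ThR:6} --- guarantees that for every $a\in R$ the coboundary $RaR$ is principal as a two-sided ideal, i.e.\ $RaR=a_*R=Ra_*$ for some $a_*\in R$; thus $R$ has the $D$-property. For the $L$-property: suppose $a\in R$ satisfies $RaR=R$. Then $1\in RaR$, so $u_1av_1+\dots+u_nav_n=1$ for some $n\in\mathbb{N}$ and $u_i,v_i\in R$, and taking $n$ minimal makes $a$ an $n$-simple element. Since $R$ is a quasi-duo domain, every $n$-simple element is invertible (as noted just after Theorem~\ref{ThR:6}), so $a\in U(R)$. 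Hence $RaR=R$ forces $a\in U(R)$, which is precisely the $L$-property.

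With both properties in hand, $R$ is a domain with the $D$-property and the $L$-property, so Theorem~\ref{ThR:6} shows that $R$ is a duo domain, completing the argument. I do not expect a genuine obstacle here: the only substantive step is the $L$-property check, and it merely combines the definition of an $n$-simple element with the quoted fact from \cite{2-zabkom} that such elements are units in quasi-duo domains --- so in the quasi-duo setting the condition ``$RaR=R$'' degenerates to ``$a$ is a unit''. (One could equally read the statement off from Corollary~\ref{theor-15}, a quasi-duo elementary divisor domain being in particular a quasi-duo B\'ezout domain that happens to be an elementary divisor domain; I have chosen the route through Theorems~\ref{ThR:6} and~\ref{ThR:8} because it matches the development of this section.)
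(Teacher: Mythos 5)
Your proof is correct. The paper itself gives no written proof of Corollary~\ref{Cor:8} --- it is merely announced as a consequence of Theorem~\ref{ThR:8} --- so the comparison is really with the intended derivation and with the proof of Corollary~\ref{theor-15}, and your argument essentially coincides with the latter: quasi-duo plus elementary divisor gives the $D$-property via \cite[Theorem~1]{2-zabkom}, quasi-duo plus domain gives the $L$-property (an element with $RaR=R$ is $n$-simple, hence a unit, since every maximal one-sided ideal is two-sided), and Theorem~\ref{ThR:6} finishes. This route is arguably preferable to the one the paper's placement suggests: invoking Theorem~\ref{ThR:8} would require first verifying the $K$-property for a quasi-duo elementary divisor domain (which is not established anywhere in the paper), and would then yield that every nonzero element is elementary, hence satisfies $RaR=R$, hence is a unit in the quasi-duo setting --- a conclusion far stronger than ``duo''. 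Your observation that the stable range $1$ hypothesis is never actually used is also accurate; the corollary is a special case of the ``only if'' direction of Corollary~\ref{theor-15}, exactly as you note in your closing parenthesis.
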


\section{Proofs}

Let $A=(a_{ij})\in R^{m\times n}$.  The coboundary of the right ideal generated by all  elements of the  matrix $A=(a_{ij})$  is  denoted by $A_*$,
that is $A_*=\sum\limits_{i=1}^m\sum\limits_{j=1}^n Ra_{ij}R$.

We start with the following well-known result.

\begin{lemma}\label{LLL:1}
    If $A$ and $B$ are equivalent matrices over $R$ then $A_*=B_*$.
\end{lemma}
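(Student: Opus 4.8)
The plan is to exploit the fact that equivalence $A\sim B$ means $A=PBQ$ with $P,Q$ invertible, so every entry of $A$ is a two-sided $R$-combination of the entries of $B$, and conversely. Concretely, writing $P=(p_{ik})$, $Q=(q_{lj})$ and $B=(b_{kl})$, we have $a_{ij}=\sum_{k,l}p_{ik}b_{kl}q_{lj}$. Since $B_*=\sum_{k,l}Rb_{kl}R$ is a two-sided ideal of $R$ containing each $b_{kl}$, each summand $p_{ik}b_{kl}q_{lj}$ lies in $B_*$, hence $a_{ij}\in B_*$. Therefore $Ra_{ij}R\subseteq B_*$ for all $i,j$, and summing over $i,j$ gives $A_*\subseteq B_*$.

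For the reverse inclusion I would simply observe that the relation $A\sim B$ is symmetric: $B=P^{-1}AQ^{-1}$ with $P^{-1},Q^{-1}$ again invertible over $R$, so the same argument applied with the roles of $A$ and $B$ interchanged yields $B_*\subseteq A_*$. Combining the two inclusions gives $A_*=B_*$, which completes the proof.

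There is essentially no obstacle here: the only point requiring a word of care is that $B_*$ is genuinely a \emph{two-sided} ideal (this is immediate from its definition as a sum of the principal two-sided ideals $Rb_{kl}R$), so it absorbs both left and right multiplication and hence contains the mixed products $p_{ik}b_{kl}q_{lj}$. No hypothesis on $R$ (commutativity, B\'ezout, stable range, $D$- or $K$-property) is used; the statement is purely formal and holds over any associative ring with $1$. I would keep the write-up to a few lines along exactly these lines.
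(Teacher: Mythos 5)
Your proof is correct and follows essentially the same route as the paper: both show that each entry $a_{ij}=\sum_{k,l}p_{ik}b_{kl}q_{lj}$ lies in the two-sided ideal $B_*$, and then obtain the reverse inclusion from the invertibility of $P$ and $Q$. Your write-up is just a more explicit version of the paper's two-line argument, and your closing remark that no hypothesis on $R$ is needed is accurate.
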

\begin{proof}
If $A=(a_{ij})=PBQ$, where  $B=(b_{ij})$, then
\[
a_{ij}\in \sum\limits_{k}\sum\limits_{s} Rb_{ks}R\quad \text{ and }\quad b_{ij}\in \sum\limits_{i}\sum\limits_{j} Ra_{ij}R.
\]
It follows that
$\sum\limits_{i}\sum\limits_{j} Ra_{ij}R= \sum\limits_{k}\sum\limits_{s} Rb_{ks}R$, so
 $A_*=B_*$.
\end{proof}

We shall freely use  the following well-known results.

\begin{lemma} \label{LLL:2}
The following statements hold:
\begin{itemize}

\item[(i)] each B\'ezout ring of stable range $1$ is Hermite  (see \cite[Theorem 2]{4-zabavsky});

\item[(ii)] if  $R$ is  a right B\'ezout ring of stable range $1$, then for any $a,b\in R$ there exist $x,d\in R$ such that $a+bx=d$ and $aR+bR=dR$ ($a+xb=d$ and $Ra+Rb=Rd$, respectively) (see \cite[Proposition 6]{5-zabavsky});

\item[(iii)] if $a\in R$ is right duo,  then every factor of $a$ is a left factor. Moreover,  if $a$ is duo,  then every proper factor of $a\in R$ is a proper left factor (see \cite[Proposition 2]{6-beauregard}).
\end{itemize}
\end{lemma}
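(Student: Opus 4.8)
\smallskip
\noindent All three assertions are classical and are proved in full in the cited sources; since they will be used repeatedly below, the plan is only to recall the short arguments, bearing in mind that the three items are essentially independent and that (i) is most efficiently obtained from (ii).

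For (ii) I would start from right B\'ezoutness: $aR+bR=eR$ for some $e\in R$. Write $a=ep$ and $b=eq$, and, using $e\in aR+bR$, choose $u,v$ with $e=au+bv=e(pu+qv)$; then $w:=pu+qv$ satisfies $ew=e$, so $e(1-w)=0$ and $pu+qv+(1-w)=1$, which means that $(p,q,1-w)$ generates $R$ as a right ideal. Stable range $1$ allows such a unimodular triple to be shortened to a unimodular pair by adding right multiples of the last coordinate to the first two: one gets $t_1,t_2$ with $p':=p+(1-w)t_1$ and $q':=q+(1-w)t_2$ satisfying $p'R+q'R=R$ and, because $e(1-w)=0$, still $ep'=a$ and $eq'=b$. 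Applying stable range $1$ once more gives $x$ with $p'+q'x$ right invertible, and since rings of stable range $1$ are Dedekind finite this forces $p'+q'x=g\in U(R)$. Then $d:=a+bx=e(p'+q'x)=eg$ has $dR=eR=aR+bR$, which is the right-handed statement; the left-handed one is entirely symmetric.

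Granting (ii), I would deduce (i) immediately: for a row $(a,b)$ pick $x$ and $d=a+bx$ with $dR=aR+bR$, so $(a,b)\sim(d,b)$; since $b\in bR\subseteq dR$ we may write $b=dc$, and the column operation replacing the second entry of $(d,b)$ by $b-dc=0$ gives $(a,b)\sim(d,0)$, with the dual (row) argument reducing a column $(a,b)^{T}$ --- which, together with the B\'ezout hypothesis, is exactly the definition of a Hermite ring. For (iii) I would simply quote Beauregard~\cite[Proposition~2]{6-beauregard}; in the transparent domain case it is a one-line cancellation: if $aR$ is two-sided and $a=bc$ then $b$ is trivially a left factor, while from $ba\in RaR\subseteq aR$, say $ba=a\beta$, one obtains $b^{2}c=bc\beta$ and hence $bc=c\beta$ after cancelling $b$, i.e.\ $a=c\beta\in cR$; the refinement for duo $a$ follows by tracking that $\beta$ is a non-unit whenever $b$ is. The only genuinely delicate point in the whole lemma is buried in (i)--(ii): when $R$ has zero divisors the cofactor row $(p,q)$ need not itself be unimodular --- only unimodular modulo the right annihilator of $e$ --- and it is precisely the stable range $1$ hypothesis, through the shortening of unimodular rows and through Dedekind finiteness, that closes this gap. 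That is the step I expect to require the most care.
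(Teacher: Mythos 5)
The paper offers no proof of this lemma at all: it is introduced with the phrase ``we shall freely use the following well-known results'' and each item is simply attributed to its source (\cite[Theorem 2]{4-zabavsky}, \cite[Proposition 6]{5-zabavsky}, \cite[Proposition 2]{6-beauregard}). Your proposal therefore goes beyond the paper by actually reconstructing the arguments, and the reconstructions are correct. Your proof of (ii) is the standard one and handles the one genuinely delicate point properly: in a right B\'ezout ring with zero divisors the cofactor pair $(p,q)$ with $a=ep$, $b=eq$ is only unimodular modulo the right annihilator of $e$, and you repair this by passing to the unimodular triple $(p,q,1-w)$ with $e(1-w)=0$ and shortening it. Note that this step silently uses two standard facts beyond the paper's literal definition of stable range $1$ (which is stated only for pairs): that $SR_1$ implies $SR_2$ (Vaserstein), so that unimodular triples can be shortened, and that rings of stable range $1$ are Dedekind finite, so that the right-invertible element $p'+q'x$ is actually a unit --- both are classical and you invoke them explicitly, so this is fine. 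The deduction of (i) from (ii) by elementary column and row operations is exactly how \cite[Theorem 2]{4-zabavsky} is usually proved. For (iii) your cancellation argument ($ba=a\beta$, hence $b^{2}c=bc\beta$, hence $a=c\beta$) is valid only in a domain with $b\neq 0$, which you acknowledge; since Beauregard's paper concerns domains and the paper only ever applies (iii) in domain settings, deferring the general statement to the citation is acceptable. In short: correct, and more informative than the paper, which proves nothing here.
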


\begin{remark}\label{rem-1}
Let $R$ be a left B\'ezout ring  of stable range $1$. For each  $a,b\in R$ there  exist $x,y, d\in R$ such that $xa+yb=d$, so $Ra+Rb=Rd$.
\end{remark}

\begin{proof}[Proof of Theorem \ref{ThR:1}]
(i) Since $R$ is a Hermite ring (see Lemma~\ref{LLL:2}(i)),  up to the equivalence of matrices, we can assume   that $A=\left[\begin{array}{cc}
          \alpha & 0 \\
          \beta & \gamma
\end{array}
\right]$, so  $x\alpha+\beta=d$ and  $R\alpha+R\beta=Rd$ for some $x,d \in R$ by  Lemma~\ref{LLL:2}(ii) and Remark~\ref{rem-1}. Thus
\[
        \left[\begin{array}{cc}
          x & 1 \\
          1 & 0
        \end{array}\right]\left[\begin{array}{cc}
          \alpha & 0 \\
          \beta & \gamma
        \end{array}\right]=\left[\begin{array}{cc}
          x\alpha+\beta & \gamma \\
          \alpha & 0
        \end{array}\right]
\]
in which  $\alpha=\alpha_0d$ for some $\alpha_0\in R$.

Let $\gamma R+d R=zR$ and $\gamma y+d=z$ for some $y\in R$. Evidently,
\[
        \left[\begin{array}{cc}
          d & \gamma \\
          \alpha & 0
        \end{array}\right]\left[\begin{array}{cc}
          1 & 0 \\
          y & 1
        \end{array}\right]=\left[\begin{array}{cc}
          d+\gamma y & \gamma \\
          \alpha & 0
        \end{array}\right]=\left[\begin{array}{cc}
          z & \gamma \\
          \alpha & 0
        \end{array}\right],
\]
where $d=zt$ and  $\gamma=z\gamma_0$ for some $t,\gamma_0\in R$. It means that  $A\sim  \left[\begin{array}{cc}
          z & \gamma \\
          d & 0
\end{array}
\right],$
where $d=zt$ and $\gamma=z\gamma_0$. This yields  that $Rd R\subseteq RzR$ and $R\gamma R\subseteq RzR$, i.e.,
\[
RzR+R\gamma R+Rd R=RzR.
\]
Finally,   $A_*=RzR+R\gamma R+Rd R=RzR$ by Lemma~\ref{LLL:1}.

(ii) For each matrix $A\in R^{2\times 2}$ there exists an equivalent matrix
$B=\left[\begin{array}{cc}
 z & \gamma \\
\delta & 0
\end{array}\right]$
such that $RzR=A_*$ by Theorem \ref{ThR:1}. If  $t\in R$ such that $zR+\gamma R=(z+\gamma t)R$ (see Lemma~\ref{LLL:2}(ii)), then
\[
        \left[\begin{array}{cc}
          z & \gamma \\
          \delta & 0
        \end{array}\right]\left[\begin{array}{cc}
          1 & 0 \\
          t & 1
        \end{array}\right]=\left[\begin{array}{cc}
          z+\gamma t & \gamma \\
          \delta & 0
        \end{array}\right].
\]
Let $z+\gamma t=a$ and $\gamma=as$ for some $s\in R$. Obviously,
\[
        \left[\begin{array}{cc}
          z+\gamma t & \gamma \\
          \delta & 0
        \end{array}\right]=\left[\begin{array}{cc}
          a & as \\
          \delta & 0
        \end{array}\right]
\]
        and
\[
        \left[\begin{array}{cc}
          a & as \\
          \delta & 0
        \end{array}\right]\left[\begin{array}{cc}
          1 & -s \\
          0 & 1
        \end{array}\right]=\left[\begin{array}{cc}
          a & 0 \\
          \delta & -\delta s
        \end{array}\right]=\left[\begin{array}{cc}
          a & 0 \\
          b & c
        \end{array}\right].
\]
Hence $RzR=RaR$  because $\left[\begin{array}{cc}
          1 & 0 \\
          t & 1
       \end{array}\right]\in GL_2(R)$. Also  $RbR\subseteq RaR$, and $RcR\subseteq RaR$. \end{proof}

\begin{lemma}\label{LLL:3}
Let $R$ be a right Hermite ring. For each $a,b\in R$ there exist $a_0,b_0, d\in R$ such that $a=da_0$, $b=db_0$ and  $a_0R+b_0R=R$.
\end{lemma}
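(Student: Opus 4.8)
The plan is to extract a common left divisor of $a$ and $b$ directly from the diagonal reduction of the row $(a,b)$ supplied by the right Hermite hypothesis. First I would invoke the definition of a right Hermite ring to obtain an invertible matrix $P=(p_{ij})\in\GL_2(R)$ with $(a,b)P=(d,0)$ for some $d\in R$; this is precisely the $1\times 2$ diagonal reduction.

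Since $P$ lies in the group of units of $R^{2\times2}$, it has a two-sided inverse $P^{-1}=(q_{ij})$. Multiplying $(a,b)P=(d,0)$ on the right by $P^{-1}$ gives $(a,b)=(d,0)P^{-1}=(dq_{11},dq_{12})$, hence $a=dq_{11}$ and $b=dq_{12}$. Setting $a_0:=q_{11}$ and $b_0:=q_{12}$ yields $a=da_0$ and $b=db_0$. It then remains to check that $a_0R+b_0R=R$: reading off the $(1,1)$ entry of the identity $P^{-1}P=I$ gives $q_{11}p_{11}+q_{12}p_{21}=1$, and since $q_{11}p_{11}\in a_0R$ and $q_{12}p_{21}\in b_0R$ we get $1\in a_0R+b_0R$, so $a_0R+b_0R=R$.

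I do not anticipate a genuine obstacle here: the only point requiring care is matching the handedness conventions — using the row (not column) diagonal reduction, so that the common divisor $d$ appears on the left of $a$ and $b$, and using the first row (not column) of $P^{-1}$, so that the resulting unimodularity relation is an identity of right ideals. Everything else is a direct unwinding of the definitions, and the argument is symmetric to the one that would prove the left-handed analogue for a left Hermite ring.
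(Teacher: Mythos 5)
Your argument is correct and coincides with the paper's own proof: both read off $a_0,b_0$ as the first row of $P^{-1}$ from the reduction $(a,b)P=(d,0)$ and obtain the unimodularity relation $a_0p_{11}+b_0p_{21}=1$ from the $(1,1)$ entry of $P^{-1}P=I$. Nothing further is needed.
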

\begin{proof}
Since $R$ is  Hermite, $(a,b)P=(d,0)\in R^{2\times 2}$ for an invertible
$P=\left[\begin{array}{cc}
u & * \\
v & *
\end{array}\right]$. If
$P^{-1}:=\left[\begin{array}{cc}
          a_0 & b_0 \\
          * & *
\end{array}\right]$,  then
\[
\textstyle
(d,0)P^{-1}=(d,0)\left[\begin{array}{cc}
          a_0 & b_0 \\
          * & *
        \end{array}\right]=(a,b),
\]
so   $da_0=a$, $db_0=b$ and $a_0u+b_0v=1$.
\end{proof}

\begin{lemma}\label{LLL:4}
Each  Hermite ring  $R$ with the  $D$-property  is an elementary divisor ring if and only if every matrix $A=(a_{ij})$ over $R$ with the property   $\sum\limits_{i}\sum\limits_{j} Ra_{ij}R=R$ has  a form  \eqref{FFF}.
\end{lemma}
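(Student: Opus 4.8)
The plan is to prove both implications. The forward direction is immediate: if $R$ is an elementary divisor ring, then by definition every matrix $A$ is equivalent to a diagonal matrix $\diag(d_1,\dots,d_k,0,\dots,0)$ with $d_i$ a total divisor of $d_{i+1}$, which is exactly the form \eqref{FFF}; the extra hypothesis $\sum_{i,j} Ra_{ij}R = R$ is not even needed here. So the content is entirely in the reverse implication, which I would carry out as follows.

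Assume every matrix $A=(a_{ij})$ over $R$ with $A_* = \sum_{i,j} Ra_{ij}R = R$ admits a diagonal reduction of the form \eqref{FFF}. Take an arbitrary matrix $A \in R^{m\times n}$; I want to show $A$ has a diagonal reduction. Let $I := A_*$ be the coboundary of the right ideal generated by the entries of $A$. Since $R$ has $D$-property, there is $a_* \in R$ with $I = a_*R = Ra_*$, so $a_*$ is a duo element. The natural move is to ``factor out'' $a_*$: using the Hermite hypothesis (and Lemma \ref{LLL:3} in a suitable iterated/matrix form), I expect to write $A = a_* A'$ (or $A = \mathrm{diag}(a_*,\dots,a_*)A'$ after passing to an equivalent matrix) where $A' = (a'_{ij})$ has the property that the entries $a'_{ij}$ generate the whole ring as a coboundary, i.e. $A'_* = R$. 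More precisely, because $a_*$ is duo, left-multiplication by $a_*$ commutes appropriately with the one-sided row and column operations, so a diagonal reduction of $A'$ will pull back to one of $A$ after re-multiplying by $a_*$. Then apply the hypothesis to $A'$: it is equivalent to some $D' = \diag(d'_1,\dots,d'_k,0,\dots,0)$ in the form \eqref{FFF}. Consequently $A$ is equivalent to $\diag(a_*d'_1,\dots,a_*d'_k,0,\dots,0)$, and one checks that $a_*d'_i$ is a total divisor of $a_*d'_{i+1}$ using that $a_*$ is duo together with the total-divisibility of the $d'_i$. Hence $A$ has the required diagonal form, and $R$ is an elementary divisor ring.

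The main obstacle I anticipate is the factoring step $A = a_* A'$ with $A'_* = R$, carried out at the level of matrices rather than single entries: Lemma \ref{LLL:3} handles a pair $(a,b)$, so I would need to iterate it across all rows and columns while controlling that no common coboundary factor survives in $A'$, i.e. genuinely extracting the \emph{full} coboundary $a_*$ and not merely a divisor of it. Showing that $A'_*$ is exactly $R$ (not just a proper ideal strictly containing, after division, what $a_*$ left behind) is the delicate point, and it is precisely here that the $D$-property — guaranteeing $a_*$ is two-sided and principal on both sides, hence duo — is used, so that dividing by $a_*$ is well-behaved and the residual coboundary is $A_*/a_* = R$. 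The second, easier technical point is verifying that total divisibility is preserved under left multiplication by the duo element $a_*$; this should follow directly from $a_*R = Ra_*$ and the inclusions defining total divisors, so I do not expect it to cause trouble.
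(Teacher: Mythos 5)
Your proposal is correct and follows essentially the same route as the paper: use the $D$-property to write $A_*=\alpha R=R\alpha$ for a duo element $\alpha$, factor $A=\diag(\alpha,\dots,\alpha)A_0$ with $(A_0)_*=R$, and apply the hypothesis to $A_0$. The two technical points you flag (that the residual coboundary is exactly $R$, and that total divisibility survives left multiplication by the duo element $\alpha$) are precisely the steps the paper's own proof treats tersely, so your caution there is well placed.
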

\begin{proof} Since the proof of the ``if'' part  is obvious, we start with the proof of the ``only if'' part. Let $Ra_{ij}R= a_{ij}^*R= Ra_{ij}^*R$ for  each $i$, $j$ and for some duo-element $\alpha\in R$ we have
\[
\sum\limits_{i}\sum\limits_{j} Ra_{ij}R=\sum\limits_{i}\sum\limits_{j} a_{ij}^*R=\sum\limits_{i}\sum\limits_{j} Ra_{ij}^*=\alpha R=R\alpha.
\]
That yields  $a_{ij}=\alpha a_{ij}^0$ and $ A=\mathrm{diag}(\alpha,\dots,\alpha)A_0$, where $A_0= (a_{ij}^0)$.
Since $R$ is a Hermite ring,  $\sum\limits_{i}\sum\limits_{j} Ra_{ij}^0R=R$ by Lemma \ref{LLL:3}.
\end{proof}

\begin{proof}[Proof of Theorem \ref{ThR:2}]
Since  $R$ is an elementary divisor ring, we  assume   that  
\[A=\diag(d_1,\ldots, d_k, 0,\ldots, 0)\] in which $Rd_{i+1}R\subseteq Rd_i\cap d_iR$ for  $i=1,\dots, k-1$. The ring  $R$ has the $D$-property, so
\[
Rd_{i+1}R=d_{i+1}^*R=R d_{i+1}^*,
\]
i.e., $d_{i+1}^*$ is a  duo  element. From $d_{i+1}^*R=Rd_{i+1}^*\subseteq d_iR\cap Rd_i$ and the definition of the  $K$-property  we obtain  that each $d_i$ is  a duo element.
\end{proof}

\begin{proof}[Proof of Theorem \ref{ThR:3}]
Since a principal ideal domain is an elementary divisor ring with the $D$-property \cite[p.\,5]{1-zabavsky}, the proof of the ``if'' part follows from  Theorem~\ref{ThR:2}.

Let $a,z\in R$ such that $a$ is duo and $z$ is a divisor of $a$. Since $a$ is duo,  $z$ is a left and right divisor of $a$  by Lemma \ref{LLL:2}(iii).  Consequently,   $RaR\subseteq zR\cap Rz$ and
$A=\left[\begin{array}{cc} z & 0 \\  0 & a \end{array}\right]$  has  a normal canonical form \eqref{FFF}. Since $R$ is an elementary divisor ring with the $DK$-property,   $A\sim \left[\begin{array}{cc} x & 0 \\  0 & b \end{array}\right]$ in which  $RbR\subset xR= Rx$.  Moreover,  $R/zR\cong R/xR$ as a right $R$-module (see \cite[Theorem 2.8]{Amitsur}) and  $x$ is a duo element. Consequently,   $z$ is a duo element by \cite[Exercise 4.2.16]{9-cohn}.
\end{proof}

\begin{proof}[Proof of Theorem \ref{ThR:4}]
Taking into  account  that the elements $\varepsilon_1, \ldots, \varepsilon_{k-1}$ are  duo in the definition of the $EDK$-property, the proof is the same as  the proof of \cite[Theorem~51]{7-kaplansky}.
\end{proof}

\begin{proof}[Proof of Theorem \ref{New_Cor:5}]
Let \eqref{EQQ:3} hold   for some $a,b,c\in R$. For the matrix $A=\left[\begin{array}{cc}
                                                                         a& b \\
                                                                         0 & c
                                                                       \end{array}\right]$
there exist invertible matrices $P:=\left[\begin{array}{cc}p&q\\ *&*\end{array}\right]$ and  $Q:=\left[\begin{array}{cc}  x& * \\ y & * \end{array}\right]$ such that
\[
PAQ=\diag(z, d)\qquad \text{and}\qquad  RdR\subseteq zR=Rz.
\]
Evidently,  $RzR+RdR=RzR=RaR+RbR+RcR=R$. Since $zR=Rz$,  then
\[
RzR=zR=Rz \quad\text{and}\quad  zR=Rz=R,
\]
i.e., $z\in U(R)$. On the other hand,  we have that $paxz^{-1}+(pb+qc)yz^{-1}=1$ which we intended  to prove in the first place.

The ring $R$ is Hermite, so according  to  Lemma \ref{LLL:4} and Theorem~\ref{ThR:4} for the proof of the ``only if'' part  it is enough to prove for the matrices $A=\left[\begin{array}{cc}
a& b \\
0 & c
\end{array}\right]$,  where $a,b,c\in R$ which satisfies \eqref{EQQ:3}.

According to the condition of our theorem, there exist $p,q\in R$ such that \[paR+(pb+qc)R=R.\] Let $pax+(pb+qc)y=1$.  Since $pR+qR=R$ and   $Rx+Ry=R$,  then  there exist matrices
$P:=\left[\begin{array}{cc}p&q\\ *&*\end{array}\right]\in\GL_2(R)$ and $Q:=\left[\begin{array}{cc}  x& * \\ y & * \end{array}\right]\in \GL_2(R)$ such that

\[
PAQ=\left[\begin{array}{cc}1&*\\ *&*\end{array}\right]\sim\left[\begin{array}{cc}1&0\\ 0&\Delta\end{array}\right],\qquad \qquad(\Delta\in R).
\]
\end{proof}

\begin{proof}[Proof of Theorem \ref{ThR:5}]
For every $a\in R\setminus\{0\}$, we have $RaR=\alpha R=R\alpha$ because $R$ is a ring  with the $DK$-property. This  yields $a=\alpha b=c\alpha$ for some $b,c\in R$.

Since $R$ is a domain, $RbR=R$ and $RcR=R$. Using the fact that  $R$ is an elementary divisor domain with the $DK$-property, we have
\begin{equation}\label{EQQ:4}
\left[\begin{array}{cc}
      a & 0 \\
      0 & a
\end{array}\right]P=Q\left[\begin{array}{cc}
      z & 0 \\
      0 & d
\end{array}\right]
\end{equation}
in which  $P=(p_{ij})\in \GL_2(R)$, $Q=(q_{ij})\in \GL_2(R)$ and $RdR\subseteq zR=Rz$.

Let us show that $d\ne 0$. Indeed,  if $d=0$, then  $ap_{12}=0$ and  $ap_{22}=0$  by \eqref{EQQ:4}. Since $a\ne 0$ and $R$ is a domain,  $p_{12}=p_{22}=0$ which is impossible because
$P=\left[\begin{array}{cc}
      p_{11} & p_{12} \\
      p_{21} & p_{22}
\end{array}\right]$
is invertible. Hence  $d\ne 0$ and   $z$ is a  duo element. Moreover,  $z\in U(R)$, because  $R=RaR=RzR$. Without loss of generality,  we can assume that  $z=1$. From \eqref{EQQ:4}, we obtain that
    \begin{equation}\label{EQQ:5}
    ap_{11}=q_{11}\qquad\text{and}\qquad  ap_{12}=q_{21}.
    \end{equation}
Taking into account that  $Q\in \GL_2(R)$,  we deduce  that    $uq_{11}+vq_{12}=1$ for some $u,v\in R$, so $uap_{11}+vap_{12}=1$  by \eqref{EQQ:5}. Consequently,   $a$ is a $2$-simple element.
\end{proof}

\begin{proof}[Proof of Theorem \ref{NewT:6}]
 Since the element $a$ is $2$-simple, there exist $u_1, u_2, v_1, v_2 \in R$ such that $u_1 a v_1 + u_2 a v_2 = 1$. Hence
$u_1aR+u_2aR=R$, because  $R$ is a ring of  stable range $1$. It follows that
\[
u_1a+u_2at=w_1\in U(R) \quad (\text{for some} \, t\in R)
\]
and  $Ra+Rt=R$. Obviously,  $xa+t=w_2\in U(R)$ for some $x\in R$. Since  $t=w_2-xa$ and  $u_1a+u_2at=w_1$, we have that
\[
\begin{split}
u_1a+u_2 a(w_2-xa)&=u_1a+u_2aw_2-u_2axa\\&=(u_1-u_2ax)a+u_2aw_2=w_1.
\end{split}
\]
This yields  $Ra+Raw_2=R$ and $sa+aw_2=w_3\in U(R)$ for some $s\in R$. Consequently,
\[
\begin{split}
    \left[\begin{array}{cc}
      s & 1 \\
      1 & 0
    \end{array}\right]\left[\begin{array}{cc}
      a & 0 \\
      0 & a
    \end{array}\right]&\left[\begin{array}{cc}
      1 & 0 \\
      0 & w_2
    \end{array}\right]\left[\begin{array}{cc}
      1 & 0 \\
      1 & 1
    \end{array}\right]=\\&=\left[\begin{array}{cc}
      sa+aw_2 & aw_2 \\
      a & 0
    \end{array}\right]
    =\left[\begin{array}{cc}
      w_3 & aw_2\\
      a & 0
    \end{array}\right]
\end{split}
\]
and   $\left[\begin{array}{cc}
      w_3 & aw_2\\
      a & 0
\end{array}\right]\sim\mathrm{diag}(1,\Delta)$ for some $\Delta\in R$.
\end{proof}

Now  we consider  rings of  stable range $1$ with the $D$-property. According to Lemma~\ref{LLL:4},  all  possible diagonal reductions to the form \eqref{FFF} for rings $R$ are conditional  on elements $a\in R$ such that $RaR=R$.

\begin{lemma}\label{LLL:7}
Let $R$ be a domain  with the $D$-property. Each  $a\in R\setminus\{0\}$ can be written in the form
\[
a=\alpha b=c\alpha,
\]
where $\alpha$ is a duo element and $b,c$  are $n$-simple elements for some  $n\in \mathbb{N}$.
\end{lemma}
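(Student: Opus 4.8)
The plan is to read the duo element $\alpha$ straight off the $D$-property and then to identify the one-sided cofactors of $a$ as $n$-simple elements via a cancellation argument inside the domain.

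First I would apply the $D$-property to the given $a\in R\setminus\{0\}$: there is an element $a_*\in R$ with $RaR=a_*R=Ra_*$. Put $\alpha:=a_*$; then $\alpha$ is a duo element by definition, and since $1\in R$ forces $a\in RaR$, we get $a\in \alpha R\cap R\alpha$, so that $a=\alpha b=c\alpha$ for suitable $b,c\in R$. Observe that $\alpha\neq 0$ — otherwise $RaR=0$ and $a=0$ — and, $R$ being a domain, $\alpha$ is both left- and right-cancellable.

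The central step is to show $RbR=R$ and, symmetrically, $RcR=R$. Here I would exploit that $\alpha$ is duo, i.e. $\alpha R=R\alpha$, to ``slide $\alpha$ out'' of a two-sided ideal: from $a=\alpha b$ one obtains $RaR=R\alpha bR=\alpha(RbR)$, because each product $r\alpha$ can be rewritten as $\alpha r'$. Since $RaR=\alpha R$, this says $\alpha(RbR)=\alpha R$, and left cancellation by $\alpha$ (legitimate because $R$ is a domain) yields $RbR=R$. The mirror computation with $a=c\alpha$, using $Rc\alpha R=(RcR)\alpha$ and right cancellation by $\alpha$, gives $RcR=R$.

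Finally, $RbR=R$ means $1=\sum_{i=1}^{k}u_ibv_i$ for some $u_i,v_i\in R$; choosing $k$ minimal shows $b$ is $k$-simple, and likewise $c$ is $\ell$-simple for the corresponding minimal $\ell$, so $a=\alpha b=c\alpha$ has the required form. I do not anticipate a genuine obstacle; the only points needing care are the manipulation of the two-sided ideals $RbR$, $RcR$ through the duo element $\alpha$ and the validity of the two cancellations, both of which rest entirely on the domain hypothesis — dropping it would break the argument.
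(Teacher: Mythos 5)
Your proposal is correct and follows essentially the same route as the paper: extract the duo element $\alpha$ with $RaR=\alpha R=R\alpha$ from the $D$-property, write $a=\alpha b=c\alpha$, use the duo relation $\alpha R=R\alpha$ to slide $\alpha$ past the coefficients, and cancel $\alpha$ (legitimate since $R$ is a domain and $\alpha\neq 0$) to conclude $RbR=RcR=R$. The paper performs the same cancellation element-wise on a representation $\sum u_iav_i=\alpha$ rather than on the ideals, but the two arguments are identical in substance.
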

\begin{proof}
Let $a\in  R\setminus \{0\}$ and $RaR=\alpha R=R\alpha$. It follows that  $a=\alpha b=c\alpha$ for some $b,c\in R$, so $\sum_{i=1}^nu_i\alpha  b v_i=\alpha$ because  $\sum_{i=1}^nu_iav_i=\alpha$ for some $u_1,\dots, u_n, v_1,\dots, v_n\in R$. Since $\alpha$ is a duo element,  $\alpha\sum_{i=1}^nu_i' b v_i=\alpha$, i.e., $\sum_{i=1}^nu_i' b v_i=1$ for some $u_1', \dots, u_n'\in R$. Thus  $RbR=R$. The proof of $RcR=R$  is similar.
\end{proof}

\begin{proof}[Proof of Theorem \ref{ThR:6}]
Follows from Lemma \ref{LLL:7}.
\end{proof}

\begin{proof}[Proof of Corollary \ref{theor-15}]
Each quasi-duo ring is a ring in which every maximal   one-sided ideal is a two-sided ideal. Since any $n$-simple element is invertible, then the $D$-property is always satisfied  \cite[Theorem~1]{2-zabkom} in a quasi-duo domain and in a quasi-duo elementary divisor domain.
\end{proof}

\begin{proof}[Proof of Corollary \ref{theor-17}]
The ring $R$ is  Hermite by \cite[Theorem 2]{4-zabavsky}.  According to Theorems~\ref{ThR:1} and  \ref{ThR:4},  it is sufficient to show our statement  for  a  matrix $A$ of the form
$\left[\begin{array}{cc}
a& 0 \\
b & c
\end{array}\right]$ in which   $RaR=R$.  Since   $a$ is  a finite element, it is   evident that   $A\sim \left[\begin{array}{cc}
                                                   f& 0 \\
                                                   0 & d
                                                 \end{array}\right]$
in which $RdR\subseteq f R\cap Rf$ by \cite[Theorem 6]{10-zabavsky}. Consequently,  $R$ is an elementary divisor ring.
\end{proof}

\begin{proof}[Proof of Theorem \ref{ThR:7}]
Follows from  Lemma \ref{LLL:7}.
\end{proof}

\begin{proof}[Proof of Theorem \ref{ThR:8}]
Let $a,b,c\in R$ such that $a\ne 0$ and  $RaR=R$. Since $R$ is an elementary divisor ring with the $DK$-property, for the matrix $A=\left[\begin{array}{cc}
                         a& b \\
                         0 & c
\end{array}\right]$ there exist invertible matrices
$P:=\left[\begin{array}{cc}p&q\\ *&*\end{array}\right]$ and $Q:=\left[\begin{array}{cc}  u& * \\ v & * \end{array}\right]$
such that $PAQ=\left[\begin{array}{cc}z&0\\ 0&d\end{array}\right]$     where $d=0$ or $RdR\subseteq zR=Rz$ for some  $z,d\in R$. Since $a\ne 0$ and $R$ is a domain, the case  $d= 0$ is impossible.

Evidently,  $RdR\subseteq RzR$ and
\[
RzR+RdR=RaR+RbR+RcR=R,
\]
so $RzR=R$. Since $z$ is a duo element, we have    $z\in U(R)$.  Clearly,  $z=pau+(pb+qc)v$  and
\[
pauz^{-1}+(pb+qc)vz^{-1}=1,
\]
 i.e., $paR+(pb+qc)R=R$.

{\it Proof of the ``only if'' part.}   Since each B\'ezout domain is a Hermite ring,   it is sufficient to prove our statement for the matrices of the form   $A=\left[\begin{array}{cc}
                         a& b \\
                         0 & c
\end{array}\right]$
in which $a\ne 0$ and $RaR=R$ by Lemma~\ref{LLL:7} and Theorem~\ref{ThR:4}. According to assumptions, there exist $p,q\in R$ such that
\[
paR+(pb+qc)R=R,
\]
that is, $pau+(pb+qc)v=1$, where  $u,v\in R$. Since $R$ is Hermite,  there exist matrices   $P:=\left[\begin{array}{cc}p&q\\ *&*\end{array}\right]\in \GL_2(R)$ and  $Q:=\left[\begin{array}{cc}  u& * \\ v & * \end{array}\right]\in \GL_2(R)$ such that
\[
m    PAQ=\left[\begin{array}{cc}1&*\\ *&*\end{array}\right]\sim \left[\begin{array}{cc}
      1& 0 \\
      0 & t
\end{array}\right],
\qquad \qquad (t\in R).
\qedhere
\]
\end{proof}

\EditInfo{August 26, 2025}{September 29, 2025}{Ivan Kaygorodov}

%%% REFERENCES %%%
{\small

}

\end{document}